\documentclass[12pt]{amsart} 

\usepackage[utf8]{inputenc}
\usepackage[english]{babel}
\usepackage{amsmath,amssymb,amsthm,mathrsfs}
\usepackage{esint}
\usepackage[colorlinks,linktocpage]{hyperref}
\usepackage[all]{xy}
\usepackage{import}
\usepackage{pdfpages}
\usepackage{transparent}
\usepackage{xcolor}
\usepackage{comment}

\usepackage{verbatim, latexsym, amssymb, amsmath,color, mathrsfs}
\usepackage{enumitem}
\usepackage{MnSymbol}
\usepackage{epsfig}
\usepackage{xcolor}

\usepackage{stmaryrd}
\usepackage{float}

\usepackage{geometry}
\usepackage{hyperref}

\newtheorem{thm}{Theorem}[section]
\newtheorem{lem}[thm]{Lemma}
\newtheorem{cor}[thm]{Corollary}
\newtheorem{prop}[thm]{Proposition}

\theoremstyle{definition}
\newtheorem{defn}{Definition}[section]
\newtheorem{prob}{Problem}[section]

\theoremstyle{remark}
\newtheorem{rmk}{Remark}[section]
\newtheorem*{rmk*}{Remark}

\newtheorem*{fact*}{Fact}

\def\dV{\mathrm{dvol}}

\title{On $3$-manifolds with small mass and $L^2$-curvature}
\author{Conghan Dong}
\address{Department of Mathematics, Duke University, 120 Science Dr, Durham, NC 27710, USA}
\email{conghan.dong@duke.edu}

\author{Antoine Song}
\address{California Institute of Technology\\ 177 Linde Hall, \#1200 E. California Blvd., Pasadena, CA 91125}
\email{aysong@caltech.edu}

\begin{document}
\date{\today}

\maketitle

\begin{abstract}
	One of S.T. Yau's problems asks the following: given a $3$-dimensional asymptotically flat manifold $M$ with non-negative scalar curvature and $L^2$-norm of the curvature tensor at most $1$, if the mass of $M$ is small, is there a bilipschitz diffeomorphism from $M$ to the flat Euclidean space $\mathbb{R}^3$? We provide a strong positive answer to this problem by using our previous work \cite{DS25}. 
\end{abstract}

\section{Introduction}

The famous Positive Mass Theorem, first proved by Schoen and Yau \cite{SchoenYau79}, states that if $(M,g)$ is an asymptotically flat 3-manifold with non-negative scalar curvature, then the  ADM mass of any given end is non-negative, and it is zero if and only if $(M, g)$ is isometric to the Euclidean 3-space $\mathbb{R}^3$. For related work, see for instance the references in \cite{DS25}.
In an attempt to better understand the ``almost-rigidity'' or ``stability'' properties of this rigidity theorem when the mass is close to zero, S.T. Yau asked the following problem in \cite{Yau93}:
\begin{prob}(\cite[Problem 17]{Yau93})\label{yau-prob}
	Let $M$ be a complete three-dimensional asymptotically flat manifold with non-negative scalar curvature. If the total mass is small when we normalize the $L^2$-norm of the curvature tensor to be one, is $M$ diffeomorphic to $\mathbb{R}^3$ so that the metric is also uniformly equivalent to the flat metric?
\end{prob}

In other words, Problem \ref{yau-prob} asks whether imposing a small upper bound on the scale invariant quantity $m(g) \int_{M} |\mathrm{Rm}_g|^2 \mathrm{dvol}_g$ implies that $(M,g)$ is bilipschitz to the Euclidean 3-space $\mathbb{R}^3$. Later, G. Huisken and T. Ilmanen formulated a related stability conjecture \cite[Section 9]{HI01}, where only a small upper bound on $m(g)$ is assumed. Building on the harmonic map method \cite{BKKS22}, we confirmed Huisken-Ilmanen's conjecture in \cite{DS25}, where we also resolved the related Bartnik's strict positivity conjecture \cite{Bartnik89}. 
Several other versions of the stability problem have been explored, for instance from  a differential geometric perspective \cite{BF99,FK01,Corvino05,Finster07,Lee09,KKL21, ABK22, Dong22} or from a more geometric measure theoretic viewpoint \cite{Sormani23,LS14,Sormani16,BKS21, HL15,HLS17,HLP22,AP20}.
Closely related stability problems for tori with almost nonnegative scalar curvature in a pointwise or integral sense were investigated in 
\cite{LNN20,Allen21,ABK22,CL22,BC25}. See also other references in \cite{DS25, song2025entropy, dong2025stability}.

Problem \ref{yau-prob} is guided by the general phenomenon of $\epsilon$-regularity in geometric analysis. $\epsilon$-regularity theorems and more generally regularity improvement phenomena involving integral curvature bounds play a central role in geometric analysis, see \cite{BKN89, Anderson90, Gao90, PetersenWei97, CheegerTian06, CWY22, Che22} for results with a small $L^p$-curvature assumption. In spite of their apparent simplicity, $L^p$-curvatures $[\int_{M} |\mathrm{Rm}_g|^p \mathrm{dvol}_g]^{\frac{1}{p}}$ are not well-understood quantities, see questions in M. Berger's book \cite[Section 11.3]{Ber03} (in particular the discussion after \cite[Lemma 268]{Ber03}), and the conjecture in \cite[Introduction]{Son21} for Einstein 4-manifolds.

Our goal is to prove the following theorem, which provides a positive and quantitative answer to Problem \ref{yau-prob}: 

\begin{thm}\label{main-thm}
For any $\epsilon >0$, there exists a constant $\epsilon _0(\epsilon )>0$ depending only on $\epsilon$ such that the following holds.
	Let $(M, g)$ be a complete $3$-dimensional asymptotically flat manifold with non-negative scalar curvature, whose $L^2$-norm of curvature satisfies $\int_{M} |\mathrm{Rm}_g|^2 \mathrm{dvol}_g \leq 1 $. 
	 If the ADM mass of one end of $M$ satisfies $m(g) \leq \epsilon _0(\epsilon )$, then $(M,g)$ is $\epsilon$-bilipschitz to the flat Euclidean 3-space $(\mathbb{R}^3,g_{\mathrm{Eucl}})$, namely there exists a diffeomorphism $\phi :  M\to \mathbb{R}^3$ such that
	\begin{align*}
		(1-\epsilon ) g \leq \phi ^* g_{\mathrm{Eucl}} \leq (1+\epsilon ) g.
	\end{align*} 

\end{thm}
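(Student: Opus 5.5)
We argue by contradiction and compactness, using the main result of \cite{DS25}: if $m(g_i)\to 0$, then after removing suitable subsets $Z_i$ with boundary area $|\partial Z_i|\to 0$, the manifolds $(M_i\setminus Z_i,\hat d_i,p_i)$ converge to $\mathbb{R}^3$ in the pointed measured Gromov--Hausdorff sense, for any choice of base points $p_i$. So suppose there are $(M_i,g_i)$ with $\int|\mathrm{Rm}_{g_i}|^2\le 1$, $R_{g_i}\ge 0$, $m(g_i)\to 0$, none of which is $\epsilon$-bilipschitz to $\mathbb{R}^3$. We want to show that the bad sets $Z_i$ are eventually empty and that the convergence can be upgraded to a uniform $C^{1,\alpha}$ (or $W^{2,2}$) closeness of metrics in harmonic coordinates, on all scales at once.

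\textbf{Step 1 (small curvature at large scale).} The quantity $\int|\mathrm{Rm}|^2$ scales like length in dimension $3$. Hence on a ball of radius $r$ the scale-invariant energy is $r^{-1}\int_{B_r}|\mathrm{Rm}|^2\le r^{-1}$. Rescaling so that $m(g_i)=1$ instead gives $\int|\mathrm{Rm}|^2\le m(g_i)\to 0$, which is the genuinely scale-invariant smallness. So we normalize $m(\tilde g_i)=\lambda_i$ with $\lambda_i$ chosen (e.g.\ $\lambda_i=m(g_i)^{1/2}$) so that both $m(\tilde g_i)\to 0$ and $\int|\mathrm{Rm}_{\tilde g_i}|^2\to 0$. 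Bilipschitz closeness is scale invariant, so nothing is lost.

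\textbf{Step 2 ($\epsilon$-regularity).} The key point is to show that every unit ball, and in fact every ball of every radius, in $(M_i,\tilde g_i)$ has volume ratio close to Euclidean. Here the $L^2$ curvature bound should prevent the small-area necks and bubbles that \cite{DS25} must excise: a region $Z_i$ with small boundary area but nontrivial topology or geometry forces curvature concentration. We would try to prove that the $Z_i$ can be taken empty by this mechanism. Then we use volume non-collapsing together with small $L^2$ curvature in dimension $3$, where $L^2=L^{n/2+1/2}$ is above critical $L^{3/2}$. We apply the $\epsilon$-regularity theorems of Anderson \cite{Anderson90} and Petersen--Wei \cite{PetersenWei97}, which require $L^p$, $p>n/2$, smallness plus non-collapsing. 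They yield harmonic coordinates with $\|g_{ij}-\delta_{ij}\|_{C^{0}}$ small on balls of a definite size. By scale invariance this holds at all scales and centers.

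\textbf{Step 3 (globalization).} Having, at every point and every scale, harmonic charts $\epsilon'$-close to Euclidean, we construct a global map. The natural choice is the asymptotically flat harmonic coordinates $(u^1,u^2,u^3)$ at infinity, as in \cite{BKKS22,DS25}. One shows via the mass formula $m\gtrsim\int|\nabla^2 u|^2/|\nabla u|$ and the local $C^{1,\alpha}$ control that $|du^j\cdot du^k-\delta^{jk}|\le\epsilon$ pointwise. This holds because an integral smallness at each scale upgrades to pointwise smallness by elliptic estimates in good charts. Then $\phi=(u^1,u^2,u^3)$ is a local diffeomorphism, proper, and of degree one at infinity, hence a diffeomorphism with the desired bilipschitz bound.

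The main obstacle is Step 2: ruling out collapsing and curvature concentration in the excised regions $Z_i$ using only $\int|\mathrm{Rm}|^2$ small. We would first try a blow-up argument at a point of maximal $C^{1,\alpha}$ harmonic radius deficit, as in Anderson's method. After rescaling to unit harmonic radius, the limit is complete, with $\mathrm{Rm}\equiv0$ in $L^2$, and hence flat. Its mass is zero, since mass rescales to zero, so \cite{DS25} forces it to be $\mathbb{R}^3$. This contradicts the harmonic radius normalization.
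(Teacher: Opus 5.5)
Your overall architecture matches the paper's: rescale so that both $m$ and $\int|\mathrm{Rm}|^2$ are small, get harmonic-radius control from non-collapsing plus small $L^2$-curvature, then globalize with the harmonic map $\mathbf{u}$ and the mass inequality. The gap is that the step everything hinges on is never proved.

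The $\epsilon$-regularity results of Anderson, Petersen--Wei and Yang take a volume lower bound as a \emph{hypothesis}. With small $L^p$-curvature and no such bound, collapse is possible (think of a flat $\mathbb{R}\times T^2$ with a tiny torus), so these results cannot supply the non-collapsing you need. The sentence ``we would try to prove that the $Z_i$ can be taken empty'' is not an argument.

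Your blow-up fallback fails on three counts.
\begin{enumerate}
\item Mass scales like length, $m(\lambda^2 g)=\lambda\, m(g)$. Rescaling by $\lambda_i=r_H(x_i)^{-1}\to\infty$ therefore produces mass $m(\tilde g_i)/r_H(x_i)$, which is uncontrolled. Only $\int|\mathrm{Rm}|^2$, which scales like $\lambda^{-1}$, goes to zero.
\item The pointed limit is not asymptotically flat, so it has no mass and \cite{DS25} does not apply to it. Moreover \cite{DS25} only describes $(M_i\setminus Z_i,\hat d_i)$, and nothing prevents the degenerating basepoint from lying in or near $Z_i$.
\item A complete flat limit can a priori be $\mathbb{R}^2\times S^1$ or $\mathbb{R}\times T^2$, which have finite harmonic radius. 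Anderson's contradiction needs Euclidean volume growth of the limit, which is exactly the non-collapsing you set out to prove. The argument is circular.
\end{enumerate}

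The missing idea is a continuity argument in the center $x$, at the fixed unit scale of the normalized metric, with no blow-up. Set $\mathcal{A}=\{x:\mathrm{Vol}_g(E\cap B_g(x,1))\geq(1-\epsilon)\omega_3\}$. It is nonempty by asymptotic flatness and it is closed. It is open because of an improvement from $(1-\epsilon)\omega_3$ to $(1-\frac12\epsilon)\omega_3$, proved by contradiction:
\begin{itemize}
\item the assumed volume bound feeds Yang's theorem, which gives a uniform $W^{2,2}$-harmonic radius;
\item this yields $C^\alpha$-subconvergence of the $g_i$ and a uniform relative isoperimetric inequality;
\item together with $\mathrm{Area}(\partial E_i)\to 0$, this forces $\mathrm{Vol}(B\setminus E_i)\to 0$;
\item the Arzel\`a--Ascoli argument of \cite{DS25} then shows that the $\mathbf{u}_i$ converge to an isometric embedding of the limit unit ball into $\mathbb{R}^3$, so its volume is $\omega_3$.
\end{itemize}
This is what starts the bootstrap at the asymptotically flat end and propagates it over all of $M$.

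Your Step 3 also skips two points the paper has to handle.
\begin{itemize}
\item The harmonic map lives on $M_{ext}$, so a minimal boundary (and hence extra ends) must be ruled out. The paper does this with the Penrose inequality, Gauss--Bonnet, and a curvature bound obtained after smoothing $g$ by Yang's local Ricci flow.
\item The pointwise bound $|\langle\nabla u^i,\nabla u^j\rangle-\delta_{ij}|\ll 1$ needs two inputs. One is smallness of $\mathrm{Vol}_g(B_g(x,\delta_1)\setminus E)$ at every $x$, which again comes out of the non-collapsing step. The other is an a priori bound $|\nabla u^k|\leq 1+\epsilon$, propagated by a second continuity argument. Smallness of $\int|\nabla^2u^k|^2/|\nabla u^k|$ alone does not give pointwise control.
\end{itemize}
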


\begin{rmk} Actually, our proof implies that the conclusion of Theorem \ref{main-thm} holds under a substantially more general assumption: for any $\epsilon>0$ and any $p>\frac{3}{2}$, if 
the scaling invariant quantity $m(g)^{2p-3} \int_{M}|\mathrm{Rm}_g|^p \dV_g$ is smaller than some positive constant $\epsilon_0(\epsilon,p)$ only depending on $\epsilon$ and $p$, then $(M,g)$ is $\epsilon$-bilipschitz to the flat Euclidean 3-space $(\mathbb{R}^3,g_{\mathrm{Eucl}})$. Note that $\frac{3}{2}$ is a natural threshold here, since $\int_{M}|\mathrm{Rm}_g|^\frac{3}{2} \dV_g$ is scaling invariant and does not involve the mass. 
\end{rmk}

The proof of Theorem \ref{main-thm} is based on our recent work \cite{DS25}, where we proved an almost rigidity result for the positive mass theorem without assuming an $L^2$-bound on the curvature.
The idea can be summarized as follows. 
Let $0<\epsilon _0 \ll 1$ be a fixed small constant. By a simple rescaling argument, it is enough to show the theorem when $(M,g)$ satisfies
\begin{align*}
	m(g) \leq \epsilon _0,\ \int_{M} |\mathrm{Rm}_{g}|^2 \mathrm{dvol}_g \leq  \epsilon _0.
\end{align*} 
For a uniformly small $\epsilon _0$, by the work of D. Yang \cite{Yang92a, Yang92b, Yang92} on $3$-manifolds with small $L^2$-curvature, we know that if the volume is locally uniformly non-collapsed, then there is a uniform lower bound on the $W^{2,2}$-harmonic radius and thus on the $C^\alpha$-harmonic radius for some $\alpha \in (0,\frac{1}{2})$. On the other hand, by applying 
ideas from our proof of Bartnik's strict positivity conjecture \cite[Theorem 1.4]{DS25},
we prove that if the mass is small enough and if the metric on a ball $B$ in $(M,g)$ is uniformly controlled in the $C^0$-topology, then $B$ has an almost Euclidean volume. In other words, an a priori coarse volume lower bound can be improved to an almost Euclidean volume bound. This way, using the asymptotic flatness of $(M,g)$ and a continuity argument, we deduce that the almost Euclidean volume bound holds everywhere, which already yields local $C^{\alpha }$-closeness of the metric $g$ to a Euclidean metric. For the last step, we apply  \cite{DS25} again to construct a global bilipschitz map from $(M,g)$ to $\mathbb{R}^3$.

\subsection*{Acknowledgements}
We would like to thank Eric Chen for discussions. C. D. would like to thank Hubert Bray for his encouragement. 
A. S. was partially supported by NSF grant DMS-2405175 and an Alfred P. Sloan Research Fellowship.

\section{Preliminaries}

\subsection{Metrics with bounded $L^p$-curvature}
We collect some results about the regularity of metrics under bounded $L^p$-curvature assumptions that we will use later. Let us first recall the definition of harmonic radius.
\begin{defn} \label{def:harmonic rad}
	Let $(M^n, g)$ be an $n$-dimensional Riemannian manifold. Given $p> \frac{n}{2}$, the $W^{2,p}$-harmonic radius at $x \in M$ is the largest number $r_{H}(x) = r_{H}(p,x)$ such that on the geodesic ball $B=B_g(x, r_{H}(x))$, there is a harmonic coordinate chart $(u^1,\ldots, u^{n}): B \to \mathbb{R}^n$, such that for $g_{ij} = g\left( \frac{\partial }{\partial u^i}, \frac{\partial }{\partial u^j} \right) $, 
	\begin{align*}
		&\frac{1}{2} \left( \delta _{ij} \right) \leq \left( g_{ij} \right) \leq 2 \left( \delta _{ij} \right) ,\\
		&r_{H} ^{1- \frac{n}{p}} \| \partial g_{ij} \|_{L^p} + r_{H}^{2- \frac{n}{p}} \|\partial ^2 g_{ij}\|_{L^p} \leq 1.
	\end{align*} 
\end{defn}
\begin{rmk} \label{remharmrad}
	By the Sobolev embedding theorem, a lower bound $r_H(x)\geq r_0>0$ on the $W^{2,p}$-harmonic radius at $x\in (M,g)$ yields a lower bound $r_H'(x)\geq C(r_0)>0$ on the analogously defined $C^{\alpha }$-harmonic radius $ r_H'(x)$ for  $\alpha \in ( 0, 2- \frac{n}{p}) $, where $C(r_0)$ only depends on $r_0$. That in turn gives a bound depending only on $r_0$ on the (relative) isoperimetric constant and the Sobolev constant of the ball $B_g(x,C(r_0))$ (see e.g. \cite{Yang92a} for definitions).
\end{rmk}

Let $\omega_n$ be the volume of the Euclidean unit $n$-ball.

\begin{thm} [{\cite{Yang92a,Yang92b}}]\label{yang-harmonic}
    Let $n \geq 3, p> \frac{n}{2}$, $0< \eta< 1$, and $(M^n, g)$ be a complete Riemannian manifold. There exists a constant $\epsilon(n, p, \eta)$ such that if a geodesic ball $B= B_g(x, 1)$ satisfies 
    \begin{align*}
    \mathrm{Vol}_g(B ) &\geq \eta ^n \omega_n ,\\
        \| \mathrm{Rm}_g\|_{L^p(B)} &\leq \epsilon(n,p, \eta),
    \end{align*}
    then there exists a uniform $r_0>0$, such that the $W^{2,p}$-harmonic radius at $x$ satisfies  $r_H(x)\geq r_0$.
\end{thm}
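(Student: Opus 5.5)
The plan is a contradiction and compactness argument in the style of Anderson's $\epsilon$-regularity for Ricci-bounded manifolds, with the $L^p$ Ricci bound and the volume lower bound taking the place of pointwise control. Since $p>\frac n2$, the quantity $\|\mathrm{Rm}_g\|_{L^p(B_g(x,r))}\, r^{2-\frac np}$ is scale invariant and decreases as $r$ decreases. The relative volume $\mathrm{Vol}_g(B_g(y,s))/s^n$, however, is not a priori monotone, because there is no pointwise Ricci lower bound. The first step is therefore to establish a local volume comparison under small $L^p$-curvature, following Petersen--Wei: relative volume comparison with an error term controlled by $\|\mathrm{Ric}_-\|_{L^p}$ and the volume ratio. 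This shows that for every $y\in B_g(x,\frac12)$ and every $s\le \frac12$ one has $\mathrm{Vol}_g(B_g(y,s))\ge c(n,\eta)\,s^n$, provided $\epsilon(n,p,\eta)$ is small.

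Next, assume for contradiction that there are $(M_i,g_i,x_i)$ satisfying the hypotheses with $\epsilon_i\to0$ and $r_H(x_i)\to0$. Pick points $y_i$ that nearly maximize $d(y_i,\partial B)/r_H(y_i)$ over $B_{g_i}(x_i,\frac12)$. This point-picking step uses that $r_H$ is $1$-Lipschitz-like: $r_H(z)\ge r_H(y)-d(y,z)$. Rescale so that $r_H(y_i)=1$. The rescaled balls then have harmonic radius at least $\frac12$ on balls of radius $R_i\to\infty$, curvature with $\|\mathrm{Rm}\|_{L^p}$ on unit balls tending to $0$ (the curvature in fact scales down, since the exponent $2-\frac np>0$), and volume lower bounds $c s^n$ at all scales.

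From the uniform $W^{2,p}$ harmonic coordinates, the rescaled metrics converge weakly in $W^{2,p}$ and strongly in $C^{\alpha}$ (and in $W^{1,q}$) to a complete limit $(X,g_\infty,y_\infty)$. In harmonic coordinates, $\mathrm{Ric}_{ij}=-\frac12 g^{kl}\partial_k\partial_l g_{ij}+Q(g,\partial g)$, so the limit is a weakly Ricci-flat $W^{2,p}$ metric. Elliptic regularity then makes it smooth and Ricci-flat, and in fact flat, because $\|\mathrm{Rm}\|_{L^p}$ tends to $0$ and weak lower semicontinuity gives $\mathrm{Rm}_{g_\infty}=0$. The limit also has Euclidean volume growth $\ge c s^n$. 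It is a complete flat manifold, and since it contains harmonic charts on arbitrarily large balls together with the volume growth bound, it must be $\mathbb R^n$, because any nontrivial flat quotient has sub-Euclidean volume growth. On $\mathbb R^n$ the harmonic radius is infinite.

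The main obstacle is the final step: upgrading convergence so that harmonic-radius control passes to the limit and contradicts $r_H(y_i)=1$. I would argue this as follows. On $\mathbb R^n$, solve Dirichlet problems on large balls in $(M_i,g_i)$ with boundary data given by Euclidean coordinates. Since the $g_i$ converge in $C^\alpha\cap W^{1,q}$ and $\|\mathrm{Ric}\|_{L^p}\to0$, $W^{2,p}$ elliptic estimates for $\Delta_{g_i}$ show that these harmonic functions converge in $W^{2,p}$ to linear coordinates. Consequently, for large $i$ they form harmonic charts on $B(y_i,2)$ with $\|\partial g\|_{L^p}$ and $\|\partial^2 g\|_{L^p}$ small. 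Getting this control on $\partial^2 g$ uses the Ricci equation in harmonic coordinates again. The result is $r_H(y_i)\ge 2$, which is the desired contradiction. Handling the nonstrict convergence here (weak $W^{2,p}$ only) through the elliptic equation is the delicate point.
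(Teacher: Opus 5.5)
Your proposal is correct in outline, but it does not follow the paper's route. The paper proves nothing from scratch here; it chains two results of Yang:
\begin{itemize}
\item \cite[Theorem 7.4]{Yang92a} turns $\mathrm{Vol}_g(B)\ge \eta^n\omega_n$ plus small $\|\mathrm{Rm}_g\|_{L^p(B)}$ into a lower bound on the isoperimetric constant of $B$, hence an upper bound on its Sobolev constant;
\item \cite[Theorem 7.1]{Yang92b} converts Sobolev control plus small $L^p$-curvature into a lower bound on the $W^{2,p}$-harmonic radius.
\end{itemize}

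\textbf{How your route differs.} Petersen--Wei volume comparison takes the place of the isoperimetric/Sobolev step. It supplies scale-invariant noncollapsing $\mathrm{Vol}_g(B_g(y,s))\ge c\,s^n$ near $x$, which is exactly what rules out a nontrivial flat quotient in the blow-up limit. The Anderson-type point-picking and blow-up then replaces Yang's harmonic radius theorem.

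\textbf{What each buys.} The citation gives a short and effective statement. Your argument is non-quantitative but more transparent. It shows that smallness of $\epsilon(n,p,\eta)$ is only needed to secure noncollapsing: after rescaling by $\lambda=r_H(y_i)^{-1}\to\infty$, the curvature norm is multiplied by $\lambda^{n/p-2}\to 0$ in any case.

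\textbf{The step that carries the weight.} The last step has to be written out, and your plan for it is sound. In the new harmonic coordinates, once $\tilde g_i$ is known to be bounded in $W^{2,p}$ and $C^0$-close to $\delta$, the compact embedding $W^{2,p}\hookrightarrow W^{1,2p}$ applies because $2p<p^*=\frac{np}{n-p}$. This gives $\partial\tilde g_i\to 0$ in $L^{2p}$, so $Q(\tilde g_i,\partial\tilde g_i)\to 0$ in $L^p$. The equation $-\frac12\tilde g^{kl}\partial_k\partial_l\tilde g_{ij}=\mathrm{Ric}_{ij}-Q_{ij}$, together with interior $L^p$ estimates applied to $\tilde g_{ij}-\delta_{ij}$, then forces $\|\partial^2\tilde g_i\|_{L^p}\to 0$. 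Hence $r_H(y_i)\ge 2$, the contradiction you want.

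\textbf{Two small repairs.}
\begin{itemize}
\item Petersen--Wei is local: the curvature integral is over $B_g(y,R)$. Smallness is assumed only on $B_g(x,1)$, so you should either restrict to $y\in B_g(x,\frac14)$ or chain the comparison a few times to cover $B_g(x,\frac12)$.
\item The inequality $r_H(z)\ge r_H(y)-d(y,z)$ is not literally true for $\frac n2<p<n$ with this normalization, since the term $r^{1-n/p}\|\partial g\|_{L^p}$ is not monotone under shrinking the ball. Your point-picking does not need it, though. Near-maximality of $d(y,\partial B')/r_H(y)$ over a fixed ball $B'$ already gives $r_H\ge 1-o(1)$ on the rescaled balls of radius $R_i\to\infty$, provided you use the same $B'$ both for the weight and for the maximization domain.
\end{itemize}
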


\begin{proof}

Thanks to \cite[Theorem 7.4]{Yang92a}, we know that the isoperimetric constant on $B$ is uniformly controlled from below (see \cite[Section 4]{Yang92a} for definitions). It is well-known that this constant is directly related to the Sobolev constant 
\cite[(4.1)]{Yang92a} so the Sobolev constant on $B$ is uniformly bounded from above. By
\cite[Theorem 7.1]{Yang92b}, we conclude the proof.
\end{proof}

Under integral control of the negative part of the Ricci curvature, the following relative volume comparison was shown in \cite{PetersenWei97}.

\begin{thm}[{\cite[Theorem 1.1]{PetersenWei97}}]\label{vol-comp}
	Let $n\geq 3$, $p> \frac{n}{2}$, and $(M^n,g)$ be a complete Riemannian manifold. There is a constant $C(n, p, R)$ which is nondecreasing in $R$ such that for any $x \in M $, when $r<R$, we have
	\begin{align*}
		\left( \frac{\mathrm{Vol}_g(B_g(x,R) )}{\omega _n R ^{n}}  \right) ^{\frac{1}{2p}} - \left( \frac{\mathrm{Vol}_g(B_g(x,r) )}{\omega _n r ^{n}} \right) ^{\frac{1}{2p}} \leq C(n, p, R) \cdot \left( \int_{M} |(\mathrm{Ric}_g)_{-}|^{p} \mathrm{dvol}_g \right) ^{\frac{1}{2p}}.
	\end{align*} 
	In particular, for $0<r\leq 1$, we have
\begin{align*}
	\left( \frac{\mathrm{Vol}_g(B_g(x,r) )}{\omega _n r ^{n}}  \right) ^{\frac{1}{2p}} \leq 1+ C(n, p) \left( \int_{M} |(\mathrm{Ric}_g)_{-}|^{p} \mathrm{dvol}_g \right) ^{\frac{1}{2p}}.
\end{align*} 
\end{thm}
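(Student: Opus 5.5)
The statement to prove is the Petersen--Wei relative volume comparison, Theorem \ref{vol-comp}, which is cited from \cite{PetersenWei97}. The ``in particular'' part is what we will actually use, so I plan to deduce it from the first inequality and, for completeness, recall how that inequality is proved.

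\emph{Deducing the second inequality.} Fix $0<r\le 1$ and apply the first inequality with $R=1$ replaced by a radius $R\to 0$. The order of the radii matters here. The first inequality bounds the normalized volume at the larger radius $R$ in terms of the normalized volume at the smaller radius $r$. I therefore rewrite it in the form
\[
\left(\tfrac{\mathrm{Vol}(B(x,r))}{\omega_n r^n}\right)^{\frac{1}{2p}} \le \left(\tfrac{\mathrm{Vol}(B(x,s))}{\omega_n s^n}\right)^{\frac{1}{2p}} + C(n,p,r)\,\|(\mathrm{Ric})_-\|_{L^p}^{\frac12}
\]
for every $s<r$, with $r$ now playing the role of the outer radius. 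Since $g$ is smooth, $\mathrm{Vol}(B(x,s))/(\omega_n s^n)\to 1$ as $s\to 0$. Letting $s\to0$ gives the bound with the constant $C(n,p,r)$. Monotonicity of $C$ in the outer radius gives $C(n,p,r)\le C(n,p,1)=:C(n,p)$ for $r\le 1$, which is the second inequality.

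\emph{Proving the first inequality.} I would follow the Petersen--Wei argument in polar coordinates about $x$. Write $\mathrm{dvol}=\mathcal{A}(t,\theta)\,dt\,d\theta$ inside the cut locus, and let $m=\partial_t\log\mathcal{A}$ be the mean curvature of the geodesic spheres. The Bochner/Riccati inequality is
\[
m'+\frac{m^2}{n-1}\le -\mathrm{Ric}(\partial_t,\partial_t).
\]
Let $\psi=(m-\frac{n-1}{t})_+$ be the excess over the Euclidean mean curvature. It satisfies
\[
\psi'+\frac{\psi^2}{n-1}+\frac{2\psi}{t}\le \rho,\qquad \rho:=(\mathrm{Ric})_-\ \text{along the geodesic}.
\]
Multiplying by $\psi^{2p-1}\mathcal{A}$ and integrating in $t$ and then over $\theta$ yields the key estimate
\[
\|\psi\|_{L^{2p}(B(x,R))}\le C(n,p)\,\|(\mathrm{Ric})_-\|_{L^p(B(x,R))}^{1/2}.
\]
The condition $p>n/2$ is needed here, so that the term $\frac{2\psi}{t}$ dominates the boundary and weight contributions. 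Next, $\frac{d}{dt}\bigl(\mathcal{A}/t^{n-1}\bigr)\le \psi\,\mathcal{A}/t^{n-1}$. Integrating this over the sphere and then in the radius, and applying H\"older, controls
\[
\frac{d}{dr}\left(\frac{\mathrm{Vol}(B(x,r))}{r^n}\right)\le C\,\frac{r^{1-n/(2p)}}{r^{n}}\,\|\psi\|_{L^{2p}}\,\mathrm{Vol}(B(x,r))^{1-\frac{1}{2p}}.
\]
This is a differential inequality for $y=\bigl(\mathrm{Vol}(B(x,r))/(\omega_n r^n)\bigr)^{1/(2p)}$ of the form $y'\le C r^{-n/(2p)}\|\psi\|_{L^{2p}}$, and $r^{-n/(2p)}$ is integrable on $[r,R]$ because $p>n/2$. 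Integrating from $r$ to $R$ gives the claim, with $C(n,p,R)$ growing like $R^{1-n/(2p)}$ and hence nondecreasing in $R$. The cut locus causes no trouble, since past the cut point $\mathcal{A}$ is set to $0$, which only helps the inequalities.

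\emph{Main obstacle.} The hardest step is the $L^{2p}$ estimate on $\psi$. The exponent and weight choices there are delicate, and they are exactly where $p>n/2$ enters.
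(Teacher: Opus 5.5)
Your outline is the Petersen--Wei argument itself, which is what the paper relies on (it quotes \cite[Theorem 1.1]{PetersenWei97} without proof). Your derivation of the second inequality is correct: take the outer radius to be $r\le 1$, let the inner radius $s\to 0$ so that the normalized volume tends to $1$, and use $C(n,p,r)\le C(n,p,1)$. Three details in your sketch of the first inequality need fixing.

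\emph{The multiplier.} It must be $\psi^{2p-2}\mathcal{A}$. With $\psi^{2p-1}\mathcal{A}$, the H\"older step pairs $\|\psi\|_{L^{2p+1}}$ with $\|(\mathrm{Ric})_-\|_{L^{p+1/2}}$, which is not the estimate you state.

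\emph{Where $p>\frac n2$ enters.} It is not through the $\frac{2\psi}{t}$ term. Integrating $\frac{1}{2p-1}(\psi^{2p-1})'\mathcal{A}$ by parts gives two contributions. The endpoint term is nonnegative and is simply dropped. The other is $-\frac{1}{2p-1}\int\psi^{2p-1}m\mathcal{A}$, with $m\le\psi+\frac{n-1}{t}$. The $\frac{n-1}{t}$ part is absorbed by $\frac{2\psi}{t}$ as soon as $2(2p-1)\ge n-1$. The $\psi^{2p}$ part must be absorbed by the quadratic term, and this is exactly where $p>\frac n2$ is needed: it requires $\frac{1}{n-1}-\frac{1}{2p-1}=\frac{2p-n}{(n-1)(2p-1)}>0$. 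The outcome is $\|\psi\|_{L^{2p}}^2\le\frac{(n-1)(2p-1)}{2p-n}\|(\mathrm{Ric})_-\|_{L^p}$.

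\emph{The derivative bound.} Your displayed bound on $\frac{d}{dr}\bigl(\mathrm{Vol}(B(x,r))/r^n\bigr)$ contains a spurious factor $r^{1-n/(2p)}$. It breaks scale invariance, and it would give $y'\le Cr^{1-n/p}\|\psi\|_{L^{2p}}$ rather than the $y'\le Cr^{-n/(2p)}\|\psi\|_{L^{2p}}$ that you then integrate. To get the correct bound, set $V(r)=\mathrm{Vol}(B(x,r))$ and $V_H(r)=\omega_n r^n$. Write $V'V_H-VV_H'=n\omega_n\int_0^r\!\int_{S^{n-1}}\bigl(\mathcal{A}(r,\theta)t^{n-1}-\mathcal{A}(t,\theta)r^{n-1}\bigr)\,d\theta\,dt$, and use $\frac{\mathcal{A}(r,\theta)}{r^{n-1}}-\frac{\mathcal{A}(t,\theta)}{t^{n-1}}\le\int_t^r\psi\,\frac{\mathcal{A}}{s^{n-1}}\,ds$ together with $t\le s$. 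This gives $\frac{d}{dr}\bigl(V(r)/r^n\bigr)\le n\,r^{-n}\|\psi\|_{L^{2p}(B(x,r))}V(r)^{1-\frac{1}{2p}}$.

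From there your integration goes through. Here too $p>\frac n2$ matters, because it makes $\int_0^R s^{-n/(2p)}\,ds$ finite, so the constant is independent of the inner radius and grows like $R^{1-n/(2p)}$.
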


We recall several definitions from \cite{Yang92} for later use. Let $C_{S}(\Omega )$ denote the Sobolev constant, that is, the smallest $A>0$ such that
	\begin{align*}
	\left( \int_{\Omega } |f| ^{\frac{2n}{n-2}}\right) ^{\frac{n-2}{n}} \leq A \int_{\Omega }|\nabla f|^2,\quad \forall f \in C ^{\infty}_{c}(\Omega )
	\end{align*} 
	holds. We say that an $(A,B)$-Sobolev inequality holds on $(\Omega , g)$ if
	\begin{align*}
	\left( \int_{\Omega } |f| ^{\frac{2n}{n-2}}\right) ^{\frac{n}{n-2}} \leq A \int_{\Omega }|\nabla f|^2 + B \int_{\Omega } f ^2,\ \forall f \in C^{\infty}_{c}(\Omega ).
	\end{align*} 
    Given $0< \delta_0 < 1$, the weak injectivity radius $\rho (\delta_0 ,x)$ at $x$ is defined by the largest radius $\rho $ such that 
	\begin{align*}
		C_{S}(B_g(x, \rho ) ) \leq \delta_0 ^{-2} C_{S}(\mathbb{R}^n),
	\end{align*} 
	and for any $B_g(y, r) \subset B_g(x, \rho )$,
	\begin{align*}
		\mathrm{Vol}_g(B_g(y,r) ) \leq \delta_0 ^{-n} \omega_n r ^{n}.
	\end{align*} 
	  Define
	\begin{align*}
		\rho (\delta_0 ,M) = \inf_{x \in M} \rho (\delta_0 ,x).
	\end{align*}     

\begin{rmk}\label{harmtoinj}
    From the definition of harmonic radius and Remark \ref{remharmrad}, a lower bound $r_H(x)\geq r_0>0$ on the  $W^{2,p}$-harmonic radius at $x$ with $p> \frac{n}{2}$ implies a  lower bound $\rho(\delta_0, x)\geq C(r_0)>0$ on the weak injectivity radius $\rho(\delta_0, x)$ for some $\delta_0>C'(r_0)>0$ where the constants $C(r_0),C'(r_0)$ only depend on $r_0$.
\end{rmk}
\begin{rmk}\label{yang1}
    By \cite[Lemma 3.1]{Yang92}, a uniform lower bound on the weak injectivity radius $\rho(\delta_0, M)$ implies a uniform $(A, B)$-Sobolev inequality, with $A, B$ depending only on $n, \delta_0$ and $\rho(\delta_0, M).$
\end{rmk}

We also recall a result related to smoothings of metrics by local Ricci flows \cite{Yang92a, Yang92}. Consider
a smooth $n$-dimensional manifold $M$ with a complete Riemannian metric $g_0$ and $\Omega $ an open subset of $M$. Let $\chi \in C^{\infty}_{c}(\Omega)$ be a compactly supported nonnegative smooth function on $\Omega$. The local Ricci flow is defined by the following evolution equation:
\begin{align}\label{local-RF-eq}
	\frac{\partial g(t)}{\partial t} = - 2 \chi ^2 \mathrm{Ric}_{g(t)},\ g(0)=g_0.
\end{align}
\begin{thm}[{\cite[Theorems 9.8, B.1, 9.9]{Yang92a}, \cite[Theorem 4.1]{Yang92}}] \label{yang2}
	Let $q>n$. Assume that a uniform $(A_0, B_0)$-Sobolev inequality holds on $(\Omega, g_0)$ and that
\begin{align*}
	\left( \int_{\Omega} |\mathrm{Rm}_{g_0}| ^{\frac{q}{2}} \mathrm{dvol}_{g_0} \right) ^{\frac{2}{q}} \leq \mu_0.
\end{align*} 
Then the local Ricci flow (\ref{local-RF-eq})
has a unique smooth solution for $0 \leq t \leq T_0$, where
$$T_0 ^{-1}\leq C(n,q) (\|\nabla \chi\|^2_{\infty}+ A_0^{-1}B_0 + A_0^{n/(q-n)}\mu_0^{q/(q-n)}).$$
Moreover, for $0<t\leq T_0$, we have the following uniform estimates
\begin{align*}
	\|\mathrm{Rm}_{g(t)}\|_{L ^{q/2}( \Omega) } &\leq 2 \|\mathrm{Rm}_{g_0}\|_{L ^{q/2}(\Omega) },\\
	\chi (x) |\mathrm{Rm}_{g(t) }|(x) &\leq C(n, q) A_0 ^{n/q} t ^{- n/q} \| \chi ^{2(1- n/q)}\mathrm{Rm}_{g_0}\|_{L^{q/2}(\Omega)}.
\end{align*} 
If the stronger assumption $\|\mathrm{Rm}_{g_0}\|_{\infty, \Omega}\leq K$ holds, then for some $0<T_1 \leq T_0$ with $T_1^{-1} \leq C(n)(\|\nabla \chi\|_{\infty} + K)$, we have $\|\mathrm{Rm}_{g(t)}\|_{\infty, \Omega} \leq 2K$ for $0\leq t\leq T_1$.
\end{thm}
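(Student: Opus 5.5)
The statement is Theorem \ref{yang2}, a result quoted from \cite{Yang92a, Yang92}, so the plan is to reconstruct Yang's argument for the local Ricci flow. It has three parts: short-time existence, propagation of the $L^{q/2}$ curvature bound, and a pointwise smoothing estimate.

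\textbf{Existence.} Since $\chi$ has compact support in $\Omega$, the equation \eqref{local-RF-eq} only modifies the metric on a compact set. I would apply DeTurck's trick, writing the flow as a strictly parabolic system on the interior of $\mathrm{supp}\,\chi$ that degenerates at its boundary. One can either perturb to $\chi^2+\delta$ on a slightly larger compact domain and let $\delta\to0$, or cite Shi-type existence. Either way this gives a unique smooth solution on some short interval. The quantitative lower bound on the existence time then comes from the a priori estimates below, through a continuity argument: as long as $\|\mathrm{Rm}_{g(t)}\|_{L^{q/2}}\le 2\mu_0$ and the Sobolev inequality persists with constants $(2A_0,2B_0)$, the flow continues.

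\textbf{$L^{q/2}$ estimate.} Under the flow, $\partial_t|\mathrm{Rm}| \le \chi^2\Delta|\mathrm{Rm}| + C\chi^2|\mathrm{Rm}|^2 + (\text{terms with } \nabla\chi)$. Set $f=|\mathrm{Rm}|^{q/4}$, multiply by $|\mathrm{Rm}|^{q/2-1}$ and integrate. After integrating by parts, the $\nabla\chi$ terms are absorbed using Cauchy--Schwarz, which costs $\|\nabla\chi\|_\infty^2\int f^2$. The reaction term is $\int \chi^2|\mathrm{Rm}|\,f^2$. I would bound it by Hölder and Sobolev:
\begin{align*}
\int \chi^2|\mathrm{Rm}|\,f^2 \le \|\mathrm{Rm}\|_{L^{q/2}}\,\|\chi f\|_{L^{2q/(q-2)}}^2 .
\end{align*}
Since $q>n$, we have $2q/(q-2)<2n/(n-2)$. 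Interpolating between $L^2$ and $L^{2n/(n-2)}$, and using the $(A,B)$-Sobolev inequality, this is at most $\tfrac12\int|\nabla(\chi f)|^2$ plus $C\,A_0^{n/(q-n)}\mu_0^{q/(q-n)}\int f^2$. The result is a differential inequality of the form $\frac{d}{dt}\int f^2\le C\,T_0^{-1}\int f^2$, which keeps the $L^{q/2}$ norm at most doubled on $[0,T_0]$.

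I also need the Sobolev constant to stay controlled along the flow. Its change is driven by $\int \chi^2|\mathrm{Ric}|$, which is controlled in time by the same bounds. This gives the $A_0^{-1}B_0$ contribution to $T_0^{-1}$.

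\textbf{Pointwise estimate.} Here $u=\chi|\mathrm{Rm}|$ is a subsolution of a heat-type inequality whose potential lies in $L^{q/2}$ with $q/2>n/2$. A parabolic Moser iteration, using the uniform Sobolev inequality on parabolic cylinders, gives the mean-value bound $\sup u(\cdot,t)\le C\,A_0^{n/q}\,t^{-n/q}\,\|\chi^{2(1-n/q)}\mathrm{Rm}_{g_0}\|_{L^{q/2}}$. The weights $\chi^{2(1-n/q)}$ arise from the cutoff powers chosen in the iteration.

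For the final claim, with $|\mathrm{Rm}|\le K$, the maximum principle applied to $\partial_t|\mathrm{Rm}|\le\chi^2\Delta|\mathrm{Rm}|+C(|\mathrm{Rm}|^2+\ldots)$ gives an ODE comparison. This yields the bound $2K$ up to time $T_1\sim(\|\nabla\chi\|_\infty+K)^{-1}$.

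\textbf{Main obstacle.} I expect the hardest step to be the degenerate parabolicity near $\partial\,\mathrm{supp}\,\chi$. Both the Moser iteration and the integration by parts have to produce cutoff weights that close up. I would handle this the way Yang does: use powers of $\chi$ as test-function weights and keep track of the exponents so that all boundary contributions vanish.
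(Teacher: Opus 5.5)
The paper gives no proof of this statement; it is quoted from Yang. Your outline does reproduce the architecture of Yang's argument: existence for the degenerate system, an $L^{q/2}$ energy estimate by H\"older--Sobolev interpolation, and Moser iteration with powers of $\chi$ as weights. As an argument, though, it has genuine gaps.

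The clearest gap is the last claim, the $2K$ bound. The variation of $\mathrm{Rm}$ under $\partial_t g=h$ involves $\nabla^2 h$. With $h=-2\chi^2\mathrm{Ric}$, the evolution of $\mathrm{Rm}$ therefore contains, besides $\chi^2(\Delta\mathrm{Rm}+\mathrm{Rm}*\mathrm{Rm})$ and $\chi\nabla\chi*\nabla\mathrm{Ric}$, the zeroth-order term $\nabla^2(\chi^2)*\mathrm{Ric}$. Already for $n=2$ one gets $\partial_t R=\Delta(\chi^2R)+\chi^2R^2$, and at a maximum of $R$ the term $R\,\Delta(\chi^2)$ survives. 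In the equation for $|\mathrm{Rm}|^2$, the first-order cutoff terms can be absorbed by $-2\chi^2|\nabla\mathrm{Rm}|^2$, but the Hessian term cannot. So a maximum-principle/ODE comparison gives a $T_1$ depending on $\sup|\nabla^2_{g(t)}\chi|$. That quantity is not a hypothesis. It is not even controlled along the flow without bounds on $\nabla\mathrm{Rm}$, since the Christoffel symbols drift by $\int_0^t\nabla(\chi^2\mathrm{Ric})$. A bound in terms of $\|\nabla\chi\|_\infty$ and $K$ alone has to use the divergence structure $\nabla^2(\chi^2)*\mathrm{Ric}=\nabla(\nabla\chi^2*\mathrm{Ric})-\nabla\chi^2*\nabla\mathrm{Ric}$. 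That means an integral argument in which this term is integrated by parts, as in your $L^{q/2}$ step. By scaling, the outcome can only be $T_1^{-1}\le C(\|\nabla\chi\|_\infty^2+K)$, not $C(\|\nabla\chi\|_\infty+K)$.

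Second, your a priori estimates are ordered circularly. The Sobolev inequality for $g(t)$ is not controlled by the $L^{q/2}$ bound. The distortion of $\int|\nabla f|^2_{g(t)}\,\mathrm{dvol}_{g(t)}$ and $\int|f|^{2n/(n-2)}\mathrm{dvol}_{g(t)}$ is governed by $\int_0^t\sup\chi^2|\mathrm{Ric}_{g(s)}|\,ds$, i.e.\ by bilipschitz control of $g(t)$. That needs the pointwise bound $\chi^2|\mathrm{Rm}_{g(s)}|\le C A_0^{n/q}s^{-n/q}\mu_0$ together with the integrability of $s^{-n/q}$, which is where $q>n$ enters. The paper uses exactly this in the proof of the main theorem to get $e^{-C\sqrt{\epsilon_0}t^\beta}g\le g(t)\le e^{C\sqrt{\epsilon_0}t^\beta}g$. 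But that pointwise bound comes from a Moser iteration which itself uses the Sobolev inequality at time $s$. So all three estimates must be closed in a single bootstrap.

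Relatedly, you misattribute the terms in $T_0^{-1}$:
the bilipschitz condition $A_0^{n/q}\mu_0T_0^{1-n/q}\ll 1$ yields the $A_0^{n/(q-n)}\mu_0^{q/(q-n)}$ term.
The $A_0^{-1}B_0$ term instead comes from the $B_0\int(\chi f)^2$ part of the Sobolev inequality in your reaction estimate (Young parameter of order $A_0^{-1}$), which you dropped.

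You also assert the weights rather than tracking them. Each Moser step with diffusion coefficient $\chi^2$ raises the exponent of $\chi$ per unit power of $|\mathrm{Rm}|$ by $2/p_k$, for a total of $2n/q$ from $p_0=q/2$. Starting from $\chi^{2(1-n/q)}$ you therefore end with $\chi^2|\mathrm{Rm}_{g(t)}|$ on the left. The printed version with $\chi$ fails where $\chi\equiv\epsilon$ on a large region, since the flow there is Ricci flow slowed down by $\epsilon^2$. This is harmless for the paper, where $\chi_i\to 1$.

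Finally, existence and uniqueness are asserted rather than shown:
flowing by $-2(\chi^2+\delta)\mathrm{Ric}$ on a compact domain needs boundary conditions.
Shi's theorem concerns the nondegenerate flow of a complete metric with bounded curvature.
The $\chi$-weighted bounds say nothing where $\chi=0$, so they do not by themselves give a smooth limit as $\delta\to 0$.
Uniqueness via DeTurck's trick would require a degenerate harmonic map heat flow.
This is a substantial part of what is being cited from Yang.
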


\subsection{Small ADM mass and harmonic maps}\label{section:ADM-mass}

A smooth orientable connected complete Riemannian $3$-manifold $(M, g)$ is called asymptotically flat if there exists a compact subset $K \subset M$ such that $M\setminus K= \bigsqcup_{k=1}^{N} M_{\mathrm{end}}^k$ consists of finite pairwise disjoint ends, and for each $1\leq k \leq N$, there exist $B>0, \sigma > \frac{1}{2}$, and a $C^\infty$-diffeomorphism $\Phi_k : M_{\mathrm{end}}^k \to  \mathbb{R}^3 \setminus B_{\mathrm{Eucl}}(0, 1)$ such that under this identification, 
	$$
	| \partial ^{l}(g_{ij}- \delta _{ij})(x)| \leq B|x|^{-\sigma - |l|},
	$$ for all multi-indices $|l|=0,1,2$ and any $x \in \mathbb{R}^3\setminus B_{\mathrm{Eucl}}(0,1)$, where $B_{\mathrm{Eucl}}(0,1)$ is the standard Euclidean ball with center $0$ and radius $1$. Furthermore, we always assume the scalar curvature $R_g$ is integrable over $(M,g)$. For a given end, its ADM mass from general relativity is then well-defined (see \cite{ADM61, Bartnik86}) and given by
	$$m(g):= \lim_{r\to \infty} \frac{1}{16\pi} \int_{S_r} \sum_{i,j} (g_{ij,i}-g_{ii,j}) v^j dA$$
	where $v$ is the unit outer normal to the coordinate sphere $S_r$ of radius $|x|=r$ in the given end, and $dA$ is its area element.

Assume that $(M^3, g)$ is a complete asymptotically flat $3$-manifold with non-negative scalar curvature, which satisfies for a given end:
$$ m(g) \leq \epsilon _0.$$
Recall that for that end, 
there is an associated ``exterior region'' $M_{ext}$, which contains that end, is diffeomorphic to $\mathbb{R}^3$ minus finitely many disjoint balls, and has a (possibly empty) minimal boundary, but contains no other closed minimal surface in this region (see e.g. \cite{HI01}). 
Given any Riemannian metric $h$, let $d_h$ denote the induced Riemannian distance.

Building on the harmonic map method of \cite{BKKS22}, the following was shown in \cite[Proposition 4.13 and (27)]{DS25}:

\begin{thm}\label{ds}
Given a fixed $0<\epsilon < 1$, for all small enough $\epsilon _0$, if $m(g)\leq \epsilon_0$ then there exists an unbounded connected good region $E \subset M_{ext}$ which is closed and has smooth boundary, so that $E$ contains the end of $M$,
\begin{align*}
\mathrm{Area}_g(\partial E) \leq C\cdot \epsilon _0 ^{2- \epsilon },
\end{align*}
and for any base point $q \in E$, there is a harmonic map $\mathbf{u}: M_{ext} \to \mathbb{R}^3$ satisfying
\begin{itemize}
	\item [(1)] $\mathbf{u}|_{E}$ is a diffeomorphism onto its image and $\mathbf{u}(q) =0 \in \mathbb{R}^3$ ;
	\item [(2)] the Jacobian matrix of $\mathbf{u}$ satisfies $|\mathrm{Jac} (\mathbf{u}) - \mathrm{Id}| \leq \epsilon $ on $E$ ;
	\item [(3)] for any $D>0$, for any $x, y \in \hat{B}_{g, E}(q, D)$,
		\begin{align*}
			|\hat{d}_{g,E}(x,y) - d_{g_{\mathrm{Eucl}}}(\mathbf{u}(x), \mathbf{u}(y) )| \leq C(D)\cdot \epsilon, 
		\end{align*} 
        where the constant $C(D)$ only depends on $D$;
	\item[(4)]
$
			(\mathbf{u})_{\sharp} \left( \mathrm{dvol}_{g}|_{\hat{B}_{g, E}(q,D)} \right)$ converges to $ \mathrm{dvol}_{g_{\mathrm{Eucl}}}|_{B_{g_{\mathrm{Eucl}}}(0,D)}$ weakly as measures as $\epsilon _0 \to 0$.

\end{itemize}
Here, ${g_{\mathrm{Eucl}}}$ is the standard flat metric on $\mathbb{R}^3$, and the geometric quantities with the hat $\,\hat{}\,$ notation are taken with respect to the induced length metric in $E$ (i.e. the distance between two points is measured using the paths inside $E$).
\end{thm}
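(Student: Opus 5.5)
This result is imported from \cite[Proposition 4.13 and (27)]{DS25}. If I had to reprove it, I would follow the harmonic map approach of \cite{BKKS22}, as follows.

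\textbf{Harmonic coordinates and the mass inequality.} Work on $M_{ext}$, where the boundary $\partial M_{ext}$ is minimal. For $i=1,2,3$, solve $\Delta_g u^i=0$ with a homogeneous Neumann condition on $\partial M_{ext}$ and the asymptotics $u^i-x^i\to 0$ in the chosen end, where $x^i$ are the asymptotic coordinates. The level set formula of \cite{BKKS22} then applies. It uses Bochner's formula and the traced Gauss equation on the level sets of $u^i$, together with $R_g\geq 0$, Gauss--Bonnet on the level sets, and the minimality of $\partial M_{ext}$ to control boundary terms. It gives, for each $i$,
\begin{align*}
\int_{M_{ext}} \frac{|\nabla^2 u^i|^2}{|\nabla u^i|}\,\dV_g \leq 16\pi\, m(g)\leq 16\pi\epsilon_0 .
\end{align*}
So the Hessians of the three coordinate functions are small in a weighted $L^2$ sense. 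The asymptotics also give $\nabla u^i\to\partial_{x^i}$ at infinity.

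\textbf{Constructing the good region $E$.} This is the step I expect to be the main obstacle. The Hessian bound is only integral, and it is weighted by $|\nabla u^i|^{-1}$, which degenerates where $\nabla u^i$ is small. I would first consider the function $\Psi=\sum_i\big||\nabla u^i|-1\big|+\sum_{i\neq j}|\langle\nabla u^i,\nabla u^j\rangle|$, together with a maximal-type function of $|\nabla^2 \mathbf u|$. A coarea argument on a sublevel set of this function should produce a threshold $\tau\approx\epsilon_0^{\epsilon/2}$-type level, chosen by Sard's theorem to give smooth boundary, whose boundary area is bounded by $\tau^{-1}$ times the integral of $|\nabla\Psi|\lesssim|\nabla^2\mathbf u|$. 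Cauchy--Schwarz against the weighted bound, with $|\nabla u^i|$ bounded below on the region kept, should give the estimate $\mathrm{Area}(\partial E)\le C\epsilon_0^{2-\epsilon}$, where the $\epsilon$-loss comes from the choice of threshold. I would take $E$ to be the unbounded component of the good set, which contains the end because $\Psi\to0$ there.

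The delicate point is to get a pointwise statement, $|\mathrm{Jac}(\mathbf u)-\mathrm{Id}|\le\epsilon$ on all of $E$, rather than just on most of it. My first attempt would be to integrate $\nabla^2\mathbf u$ along curves inside $E$ connecting a point to infinity. I would use a Fubini/averaging argument over families of curves to show that some curve sees small Hessian integral, and I would possibly iterate the excision to remove bad points.

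\textbf{Properties (1)--(4).} Once $|\mathrm{Jac}(\mathbf u)-\mathrm{Id}|\le\epsilon$ holds on $E$, the map $\mathbf u|_E$ is a local diffeomorphism. It is proper near infinity, since it is asymptotic to the identity there. Connectedness of $E$ plus a degree argument then gives injectivity, which is (1). Translating so that $\mathbf u(q)=0$ is harmless. For (3), the induced length metric $\hat d_{g,E}$ is bilipschitz to the pullback of the Euclidean metric with constant $1\pm\epsilon$. The remaining issue is to show that the holes $M\setminus E$, having tiny boundary area, do not distort Euclidean distances by more than $C(D)\epsilon$. For this I would use the isoperimetric inequality in $\mathbb R^3$ applied to $\mathbf u(\partial E)$, which shows that the removed set has small diameter-type size in the image, so Euclidean segments can be rerouted around it at small cost. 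Finally, (4) follows from the area formula: $\det \mathrm{Jac}(\mathbf u)\to1$ on $E$, and the image of $\hat B_{g,E}(q,D)$ differs from $B_{g_{\mathrm{Eucl}}}(0,D)$ by a set of volume tending to $0$ as $\epsilon_0\to0$, by (3) and the isoperimetric bound on the holes.
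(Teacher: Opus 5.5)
Citing \cite[Proposition 4.13 and (27)]{DS25} is exactly what the paper does: Theorem~\ref{ds} is not proved here, only imported. Your BKKS mass-inequality step is also fine. The self-contained reconstruction, however, has a genuine gap at its central step, the construction of $E$ with $\mathrm{Area}_g(\partial E)\le C\epsilon_0^{2-\epsilon}$. Choose $t\in[\tau,2\tau]$ by coarea and apply Cauchy--Schwarz against $\int\frac{|\nabla^2u^i|^2}{|\nabla u^i|}\le 16\pi\epsilon_0$. This only gives
\begin{align*}
\mathrm{Area}_g(\{\Psi=t\})\lesssim \tau^{-1}\epsilon_0^{1/2}\,\mathrm{Vol}_g(\{\tau<\Psi<2\tau\})^{1/2}.
\end{align*}
With $\tau=\epsilon_0^{\epsilon/2}$ you would therefore need $\mathrm{Vol}_g(\{\tau<\Psi<2\tau\})\lesssim\epsilon_0^{3-\epsilon}$, i.e.\ that the degenerate region has linear size of order $\epsilon_0$. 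That is the heart of the matter, and nothing in your sketch supplies it.

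A uniform Sobolev inequality on $(M_{ext},g)$ would close the argument. Applied to $\min(\Psi,2\tau)$, it gives volume $\lesssim\tau^{-6}\epsilon_0^3$, hence area $\lesssim\tau^{-4}\epsilon_0^2$. But small-mass manifolds satisfy no such inequality. $M_{ext}$ can contain thin deep wells of radius $\sim\epsilon_0$ and arbitrarily large volume, with no closed minimal surfaces, and these violate any uniform isoperimetric inequality. Transferring the estimate to $\mathbb R^3$ through $\mathbf u$ would instead require injectivity of $\mathbf u$ on the unbounded component of $\{\Psi<2\tau\}$, which is part of the conclusion.

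Note also that if $E$ is a sublevel set of $\Psi$, the pointwise Jacobian bound on $E$ is automatic. So the difficulty is not where you place it; it lies in the area bound and in injectivity. Your ``maximal-type function'' is where a scale-invariant mechanism would have to enter. Since $\int|\nabla^2u|^2/|\nabla u|$ has units of length, one expects the bad set to be covered by pieces of total radius $\lesssim\epsilon_0^{1-\epsilon/2}$, hence area $\lesssim\epsilon_0^{2-\epsilon}$. Carrying this out without curvature or volume bounds on $g$ is precisely the nontrivial work done in \cite{DS25}, and it is left undone here.

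Two later steps also fail as written. For (1), the degree argument you describe does not give injectivity of $\mathbf u|_E$, because $E$ has boundary. A point of $\mathbf u(E)$ may have two positively oriented preimages in $E$, compensated by a negatively oriented preimage in $M_{ext}\setminus E$. Boundary points of $E$ may also map into the interior of $\mathbf u(E)$, so $\mathbf u|_E$ need not be a covering map. Compare the end of the proof of Theorem~\ref{main-thm}, where the degree argument is legitimate because $\mathbf u$ is nondegenerate on all of $M=M_{ext}$, which has no boundary.

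For (3), the isoperimetric inequality controls the volume of the holes $\mathbb R^3\setminus\mathbf u(E)$, not their diameter; a long thin tube has tiny boundary area. So the ``small diameter'' claim is false. The rerouting can be saved by a projection argument instead: the projection of a surface of area $A<\pi r^2$ onto a plane misses most lines parallel to a given segment within distance $r$. That argument, however, presupposes (1).
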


\section{Local non-collapsing}

In this section, we will consider small enough $\epsilon_0>0$ and assume that $(M, g)$ is a complete asymptotically flat $3$-manifold with non-negative scalar curvature, which satisfies for a given end:
\begin{align}\label{epsilon-condition}
	m(g) \leq \epsilon _0, \quad \int_{M} |\mathrm{Rm}_g|^2 \mathrm{dvol}_{g} \leq \epsilon _0.
\end{align} 
Let 
\begin{align}\label{goodregion}
E\subset M
\end{align} 
be the good region inside $M$ whose existence and properties are guaranteed by Theorem \ref{ds}. We will use the notation $C$ to denote some universal constant (which may change from line to line).

First we observe the following lemma.
\begin{lem}\label{lem-harmonic}
	For any $v_0 \in (0,1)$, there exists uniform $ \epsilon _0(v_0)>0$ such that for all $0< \epsilon _0 \leq \epsilon _0(v_0)$, for any $x \in M$ with $\mathrm{Vol}_g(B_g(x, 1) ) \geq v_0$, there exists a uniform $r_0 = r_0(v_0)>0$ so that the $W^{2,2}$-harmonic radius satisfies
	\begin{align*}
		r_{H}(y) \geq r_0,\quad \forall y \in B_g(x,2).
	\end{align*} 

\end{lem}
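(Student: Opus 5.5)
The natural route is Theorem \ref{yang-harmonic} with $n=3$, $p=2$, applied at each point $y\in B_g(x,2)$. Its hypotheses are a volume lower bound on the unit ball around the point and smallness of $\|\mathrm{Rm}_g\|_{L^2}$ on that ball. The curvature hypothesis is automatic from \eqref{epsilon-condition}, since $\|\mathrm{Rm}_g\|_{L^2(B_g(y,1))}\le \epsilon_0^{1/2}$. So it suffices to transfer the volume lower bound from $B_g(x,1)$ to $B_g(y,1)$ for every $y\in B_g(x,2)$, with a constant depending only on $v_0$. We then choose $\eta$ from that bound and take $\epsilon_0(v_0)$ so small that $\epsilon_0^{1/2}\le \epsilon(3,2,\eta)$.

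For the transfer we use the relative volume comparison of Theorem \ref{vol-comp} with $p=2>3/2$, controlling $\|(\mathrm{Ric}_g)_-\|_{L^2}\le C\|\mathrm{Rm}_g\|_{L^2}\le C\epsilon_0^{1/2}$. Fix $y\in B_g(x,2)$. Then $B_g(x,1)\subset B_g(y,3)$, so
\[
\mathrm{Vol}_g(B_g(y,3))\ge v_0 .
\]
Apply Theorem \ref{vol-comp} at $y$ with $r=1$ and $R=3$:
\[
\Big(\tfrac{\mathrm{Vol}_g(B_g(y,1))}{\omega_3}\Big)^{1/4}\ \ge\ \Big(\tfrac{v_0}{27\,\omega_3}\Big)^{1/4}-C(3,2,3)\,C\,\epsilon_0^{1/4}.
\]
If $\epsilon_0(v_0)$ is small enough that the error term is at most half of the first term, we obtain
\[
\mathrm{Vol}_g(B_g(y,1))\ge v_0/(27\cdot 16)=:\eta^3\omega_3 ,
\]
where $\eta=\eta(v_0)\in(0,1)$. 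The upper bound $\eta<1$ holds because $v_0<1$ and the constant factor is small.

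With this volume bound, Theorem \ref{yang-harmonic} applies at every $y\in B_g(x,2)$ with the same $\eta(v_0)$. It gives $r_H(y)\ge r_0$, where $r_0$ depends only on $\eta$, and hence only on $v_0$, because Yang's constant depends only on $(n,p,\eta)$. Shrinking $\epsilon_0(v_0)$ once more to also guarantee $\epsilon_0^{1/2}\le\epsilon(3,2,\eta(v_0))$ completes the argument.

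The only delicate point is the uniformity of the constants. Theorem \ref{vol-comp} is invoked at the fixed radius $R=3$, so $C(3,2,3)$ is universal. We also need that the $r_0$ produced by Theorem \ref{yang-harmonic} depends only on $\eta$, and is not affected by the global geometry of $M$ such as asymptotic flatness. The latter holds because Yang's estimates are local to $B_g(y,1)$. No mass assumption is needed for this lemma.
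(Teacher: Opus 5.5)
Your proposal is correct and is essentially the paper's proof: you propagate the volume lower bound from $B_g(x,1)$ to every nearby unit ball via the Petersen--Wei comparison (Theorem~\ref{vol-comp} with $p=2$, using $|(\mathrm{Ric}_g)_-|\le C|\mathrm{Rm}_g|$), then invoke Yang's result (Theorem~\ref{yang-harmonic}) with $\eta=\eta(v_0)$ and $\epsilon_0$ small. The only cosmetic difference is that the paper compares radii $1$ and $4$ for $y\in B_g(x,3)$, while you compare radii $1$ and $3$ for $y\in B_g(x,2)$, which suffices for the stated conclusion.
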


\begin{proof}
	By the volume comparison result Theorem \ref{vol-comp}, for any $y \in B_g(x, 3)$, we have
\begin{align*}
	\left( \frac{\mathrm{Vol}_g(B_g(y ,1) )}{ \omega _3 } \right) ^{\frac{1}{4}} 
	&\geq \left( \frac{\mathrm{Vol}_g(B_g(y, 4 ) )}{ \omega _3 4 ^3 } \right)^{\frac{1}{4}} - C\cdot  \epsilon _0 ^{\frac{1}{4}} \\
	&= \left( 64 ^{-1} \omega _3 ^{-1} v_0 \right) ^{\frac{1}{4}} - C\cdot  \epsilon _0 ^{\frac{1}{4}} \\
	&\geq \left( 10 ^{-3} v_0\right) ^{\frac{1}{4}},
\end{align*} 
where in the last inequality, we used $\epsilon _0\leq \epsilon(v_0)\ll v_0$.
Applying Theorem \ref{yang-harmonic} together with the small $L^2$-curvature assumption (\ref{epsilon-condition}), the conclusion follows if $\epsilon_0$ is uniformly small.

\end{proof}

Next, we prove the following key proposition:

\begin{prop}\label{vol-improve-prop}
	For any $\epsilon >0$, there exists $\epsilon _0(\epsilon )>0$ such that for all $\epsilon _0 \leq \epsilon _0(\epsilon )$, for any $x \in M$, if $\mathrm{Vol}_g( E \cap B_g(x, 1) ) \geq (1- \epsilon ) \omega _3$, then $\mathrm{Vol}_g(E \cap B_g(x, 1) ) \geq (1- \frac{1}{2} \epsilon ) \omega _3$.
\end{prop}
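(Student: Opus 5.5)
We argue by contradiction and compactness. Fix $\epsilon$ (we may assume it small) and suppose there are sequences $\epsilon_{0,i}\to 0$, asymptotically flat manifolds $(M_i,g_i)$ satisfying (\ref{epsilon-condition}) with $\epsilon_{0,i}$, good regions $E_i$ as in (\ref{goodregion}), and points $x_i\in M_i$ such that
\begin{align*}
(1-\epsilon)\omega_3\le \mathrm{Vol}_{g_i}(E_i\cap B_{g_i}(x_i,1)) < (1-\tfrac12\epsilon)\omega_3 .
\end{align*}
The lower bound gives $\mathrm{Vol}_{g_i}(B_{g_i}(x_i,1))\ge v_0:=(1-\epsilon)\omega_3$. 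Lemma \ref{lem-harmonic} then gives a uniform lower bound $r_0=r_0(v_0)$ on the $W^{2,2}$-harmonic radius on $B_{g_i}(x_i,2)$. By Remark \ref{remharmrad}, after passing to a subsequence, the pointed balls $(B_{g_i}(x_i,2),x_i)$ converge in $C^{\alpha}$ (and weakly in $W^{2,2}$) to a limit ball $(B_\infty, g_\infty, x_\infty)$. Because $\int|\mathrm{Rm}_{g_i}|^2\to 0$ and the harmonic coordinates give $\Delta g_{ij}\sim \mathrm{Ric}$, the limit metric is $W^{2,2}$ with vanishing curvature, hence flat. It is then locally isometric to Euclidean space. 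In particular the metric balls converge and $\mathrm{Vol}_{g_i}(B_{g_i}(x_i,1))\to\mathrm{Vol}_{g_\infty}(B_\infty(x_\infty,1))\le\omega_3$, where the last inequality also follows from Theorem \ref{vol-comp}.

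The task is then to show that $E_i$ eventually fills almost all of $B_{g_i}(x_i,1)$, and that the flat limit ball is a full Euclidean unit ball rather than, say, a quotient or a ball cut by a boundary. First, $\mathrm{Area}_{g_i}(\partial E_i)\le C\epsilon_{0,i}^{2-\epsilon}\to 0$. With the uniform relative isoperimetric inequality on balls of radius $r_0$ (Remark \ref{remharmrad}), this means that on each such small ball either $E_i$ or its complement has volume $o(1)$. Covering $B_{g_i}(x_i,1)$ by a bounded number of such balls and using connectedness of the ball through chains of overlapping small balls, we conclude that either $\mathrm{Vol}(E_i\cap B_{g_i}(x_i,1))\to 0$ or $\mathrm{Vol}(B_{g_i}(x_i,1)\setminus E_i)\to 0$. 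The first alternative is excluded by the lower bound $(1-\epsilon)\omega_3$. Therefore $\mathrm{Vol}(E_i\cap B_{g_i}(x_i,1))-\mathrm{Vol}(B_{g_i}(x_i,1))\to 0$.

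It remains to show that $\mathrm{Vol}_{g_\infty}(B_\infty(x_\infty,1))=\omega_3$. For this, pick $q_i\in E_i$ close to $x_i$, which is possible since $E_i$ fills most of the ball, and apply Theorem \ref{ds} with base point $q_i$ and $D=3$. The harmonic maps $\mathbf u_i$ satisfy $|\mathrm{Jac}\,\mathbf u_i-\mathrm{Id}|\le\epsilon'$ on $E_i$ for any prescribed $\epsilon'$ once $\epsilon_{0,i}$ is small. They are also almost isometries for $\hat d_{g_i,E_i}$ by item (3), and they push forward volume to Euclidean volume by item (4).

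The main obstacle is comparing the intrinsic distance $\hat d_{g_i,E_i}$ with $d_{g_i}$ on $B_{g_i}(x_i,1)$, that is, ruling out that the small holes $M_i\setminus E_i$ create large detours. The plan is to use the uniform $C^\alpha$ control. The complement $M_i\setminus E_i$ inside the ball has volume tending to zero and boundary area tending to zero. In the almost Euclidean harmonic coordinates, a $g_i$-geodesic segment can be perturbed within a thin tube, of radius $\delta$, to avoid this set: a Fubini/coarea argument shows that most parallel translates of the segment meet $\partial E_i$ in a set of small length, and the complement components are small, so detours cost $o(1)$. This gives $\hat d_{g_i,E_i}\le d_{g_i}+o(1)$ on the ball; the reverse inequality is trivial. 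Then item (3) shows that $\mathbf u_i$ maps $E_i\cap B_{g_i}(x_i,1)$ into $B_{\mathrm{Eucl}}(\mathbf u_i(x_i),1+o(1))$ and covers $B_{\mathrm{Eucl}}(\mathbf u_i(x_i),1-o(1))$ up to measure $o(1)$. Item (4) then gives
\begin{align*}
\mathrm{Vol}_{g_i}(E_i\cap B_{g_i}(x_i,1))\to\omega_3 ,
\end{align*}
which contradicts the assumed upper bound $(1-\tfrac12\epsilon)\omega_3$. If the detour estimate proves delicate, a fallback is to use item (2) directly. On $E_i$ we have $\mathbf u_i^*g_{\mathrm{Eucl}}\approx g_i$, so $\mathbf u_i$ is $(1+\epsilon')$-Lipschitz along paths. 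Combined with the limit flat structure, this should identify the limit ball isometrically with a Euclidean unit ball.
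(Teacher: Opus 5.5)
The step you single out as the main obstacle does not go through as sketched. Your Fubini/coarea argument uses only that $M_i\setminus E_i$ has small volume and small boundary area. These two facts cannot give $\hat d_{g_i,E_i}\le d_{g_i}+o(1)$ for \emph{all} pairs of points in the ball.

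For instance, $E_i$ could contain a pocket of radius $\rho$ around a point $y$, enclosed by a thin shell of bad set except for an opening into a channel of radius $\tau$ and length $L\approx r_0$, itself sheathed by bad set. This costs only $O(\rho^2+\tau L)$ in area and even less in volume. Yet $\hat d_{g_i,E_i}(y,z)\gtrsim L$ for $z$ just outside the shell. Moreover, the bad component involved has small area but diameter $\approx L$, so ``the complement components are small'' is not available.

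What Fubini/Crofton really controls is the number of crossings with $\partial E_i$, not a ``length''. It gives that most translates of a segment avoid $\partial E_i$ and hence lie in $E_i$. That yields $\hat d_{g_i,E_i}(y',z')\le d_{g_i}(y,z)+C\delta$ only for \emph{some} $y',z'\in E_i$ that are $\delta$-close to $y,z$. Joining $y$ to $y'$ inside $E_i$ is exactly the pocket problem.

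In this setting pockets are ruled out by volume considerations instead. Intrinsic balls $\hat B_{g_i,E_i}(y,r)$ have almost Euclidean volume by item (4) of Theorem \ref{ds} with base point $y$, so by Theorem \ref{vol-comp} they almost fill $B_{g_i}(y,r)$. Together with a volume lower bound on small balls, this forces the detour estimate; that is the kind of argument behind Lemma \ref{lem-key-DS}. Your fallback does not help either, since item (2) only gives $|\mathbf u_i(y)-\mathbf u_i(z)|\le(1+\epsilon')\hat d_{g_i,E_i}(y,z)$, again in terms of the intrinsic distance.

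However, you do not need the detour estimate at all. To contradict $\mathrm{Vol}_{g_i}(E_i\cap B_{g_i}(x_i,1))<(1-\frac12\epsilon)\omega_3$ you only need a \emph{lower} volume bound, and that uses only the trivial inequality $d_{g_i}\le \hat d_{g_i,E_i}$. Take $q_i\in E_i\cap B_{g_i}(x_i,\eta)$; your isoperimetric step provides such a point, since $B_{g_i}(x_i,\eta)$ has volume bounded below. Then $\hat B_{g_i,E_i}(q_i,1-\eta)\subset E_i\cap B_{g_i}(x_i,1)$. By item (3), once $C(2)\epsilon'<\eta$, this intrinsic ball contains $\{y\in \hat B_{g_i,E_i}(q_i,2): |\mathbf u_i(y)|<1-2\eta\}$, whose volume tends to $\omega_3(1-2\eta)^3$ by item (4).

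This is just the ``covers'' half of your claim. The ``maps into'' half is the part that needs the detour estimate, and it only serves an upper bound, which you don't need and which Theorem \ref{vol-comp} supplies anyway.

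Repaired this way, your proof is shorter than the paper's. The paper passes to the $C^\alpha$-limit $g_0$, proves the two-sided local Gromov--Hausdorff approximation of Lemma \ref{lem-key-DS}, and runs an Arzel\`a--Ascoli argument to get an isometric embedding of $(B_{g_0}(p_0,1),g_0)$ into $\mathbb R^3$. In your version, the harmonic radius bound and relative isoperimetric inequality serve only to place a point of $E_i$ near $x_i$, and the flatness of the limit (correct, but unused) is never needed. Both routes read item (4) as holding along arbitrary base points $q_i\in E_i$.
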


\begin{proof}
	We argue by contradiction. Assuming that the conclusion does not hold, there exists  a fixed $\epsilon>0$ and a sequence of pointed asymptotically flat $3$-manifolds $(M_i, g_i, p_i)$  satisfying (\ref{epsilon-condition}) with $ \epsilon _i \to 0$ and for the corresponding good regions $E_i$ in $M_i$ given by Theorem \ref{ds}, we have 
	\begin{align}\label{contra-assump}
		(1- \epsilon ) \omega _3 \leq \mathrm{Vol}_{g_i}(E_i \cap B_{g_i}(p_i, 1) ) < (1- \frac{1}{2} \epsilon ) \omega _3.
	\end{align} 
	By Lemma \ref{lem-harmonic}, we know $r_{H}(y) \geq 2r_0$ for a uniform $0<r_0 \ll 1$ and for all $y \in B_{g_i}(p_i, 2)$. By the Sobolev embedding theorem, up to a subsequence, we can assume that for some $\alpha \in (0, \frac{1}{2})$, for some $C^\alpha$-Riemannian metric $g_0$ on a manifold and a geodesic ball $B_{g_0}(p_0, 2)$ with respect to $g_0$,
	\begin{align*}
		(B_{g_i}(p_i, 2) , g_i) \to (B_{g_0}(p_0, 2), g_0)
	\end{align*} 
	in the $C^{\alpha }$-topology. In other words, there exist $C^{1, \alpha }$-embeddings 
	$$\psi _i: B_{g_0}(p_0, 2) \to B_{g_i}(p_i, 2+ r_0)$$
	with
    $$\quad  \psi _i(p_0) = p_i \quad  \text{and}\quad B_{g_i}(p_i, 2) \subset \psi _i(B_{g_0}(p_0, 2) ) \subset B_{g_i}(p_i, 2+ \delta _i)$$ for some positive $\delta _i \to 0$, and $\psi _i^* g_i \to g_0$ as tensors in the $ C^{\alpha }$-topology. In particular $\psi _i^* g_i \to g_0$ uniformly, so finite perimeter subsets of $B_{g_i}(p_i, 2)$ satisfy a relative isoperimetric inequality with a uniform constant (cf. \cite[Theorem 5.11 (ii)]{EG15}). 

Set $D _i:=B_{g_i}(p_i, 2) \cap  E_i$. If $\mathrm{Vol} _{g_i}(D _i) < \frac{1}{2} \mathrm{Vol}_{g_i}(B_{g_i}(p_i, 2) )$, by the relative isoperimetric inequality, we know that for all $i$ large enough,
$$\mathrm{Vol}_{g_i}(D_i) \leq C\cdot \mathrm{Area}_{g_i}(\partial E_i \cap B_{g_i}(p_i, 2)) ^{\frac{3}{2}} \leq C\cdot \epsilon _i ^{2} \to 0 \quad \text{as $i\to \infty$},$$ 
where in the last inequality, we used the area bound given in Theorem \ref{ds}. This
contradicts our noncollapsing assumption  (\ref{contra-assump}).  So by $\mathrm{Vol}_{g_i} (D_i) \geq  \frac{1}{2} \mathrm{Vol}(B_{g_i}(p_i, 2) )$, and by the relative isoperimetric inequality again, we have in fact
	\begin{align}\label{small-vol-bad}
		\lim_{i\to \infty}  \mathrm{Vol}_{g_i}(B_{g_i}(p_i, 2) \setminus E _i)= 0.
	\end{align} 
By the uniform convergence of the metric tensors, together with the contradiction assumption (\ref{contra-assump}),
\begin{align}\label{desired contra}
	\mathrm{Vol}_{g_0}(B_{g_0}(p_0, 1) ) = \lim_{i\to \infty} \mathrm{Vol}_{g_i}(B_{g_i}(p_i, 1)\cap E_i) \leq (1- \frac{1}{2} \epsilon ) \omega _3. 
\end{align}

\begin{lem}\label{lem-key-DS}
Let $\hat{d}_{g_i, E_i}$ be the induced length metric  given by the restriction of $g_i$ to $E_i$.
		For any $x \in B_{g_0}(p_0, 1) $, up to a subsequence, the maps
		\begin{align*}
			\psi_i ^{-1}: \left( E_i \cap B_{g_i}(x_i, r_0), \hat{d}_{g_i, E_i} \right) \to \left( B_{g_0}(x,  r_0), d_{g_0} \right) 
		\end{align*} 
		are $\delta' _i$-Gromov-Hausdorff approximation for $x_i:= \psi _i(x)$ and some positive $\delta' _i \to 0$. Moreover, $(\psi _i^{-1})_{\sharp} (\mathrm{dvol}_{g_i}|_{E_i \cap B_{g_i}(x_i,  r_0 )})$ weakly converges to $\mathrm{dvol}_{g_0}|_{B_{g_0}(x,  r_0)}$.
	\end{lem}
	\begin{proof}
		The proof is similar to \cite[Lemma 5.4]{DS25}. The map $\psi _i ^{-1}$ restricted to $E_i \cap B_{g_i}(x_i,  r_0)$ is a $C^{1,\alpha}$-diffeomorphism onto its image, so we can view $E_i \cap B_{g_i}(x_i,  r_0)$ as a subset of $B_{g_0}(x,  r_0)$ equipped with the metric $g_i$, and we will discard the notation $\psi _i$ in what follows. Earlier, we saw that $g_i \to g_0$ in the $C^{\alpha }$-topology, so we have
		\begin{align}\label{unif cv}
			(1- \delta _i) g_0 \leq g_i \leq (1+ \delta  _i) g_0
		\end{align} 
        for some positive $\delta_i\to 0$.
        Recall the notation $\hat{B}_{g_i, E_i}(y, r)$ 
        for the region in $E_i$ equal to the metric $r$-ball centered at $y$ in $E_i$ with respect to the induced length metric on $E_i$.
		By the smallness of the volume of the bad region  (\ref{small-vol-bad}) and the uniform convergence (\ref{unif cv}), we can choose basepoints $x_i '  \in E_i \cap B_{g_i}(x_i, r_0)$ so that for any fixed $r>0 $ and any $\hat{B}_{g_i, E_i}(y_i, r) \subset \hat{B}_{g_i, E_i}(x_i', \frac{3}{2}r_0)$ with $y_i \to y\in B_{g_0}(p_0, 2) $, the volume of good regions converges:
		\begin{align*}
			\mathrm{Vol}_{g_i}( \hat{B}_{g_i, E_i}(y_i, r) ) \to \mathrm{Vol} _{g_0} ( B_{g_0}(y, r) ). 
		\end{align*} 
        Notice that $\mathrm{Vol} _{g_0} ( B_{g_0}(y, r) )\geq c_{g_0} r ^{3} $
		for some constant $c_{g_0}>0$ depending only on $g_0$. This coarse volume lower bound and the same argument as in \cite[Lemma 5.4]{DS25} then implies the following properties for any fixed small $r>0$ and some sequence $\delta_i'\to 0$:
		\begin{itemize}
			\item [(i)] $B_{g_0}(x, r_0)$ is contained in the $\delta_i'$-neighborhood of $E_i \cap B_{g_i}(x_i,  r_0)$ in terms of the metric $g_0$;
			\item [(ii)] For any $y, z \in E_i \cap B_{g_i}(x_i, r_0)$ with $d_{g_0}(y, z) \geq r$,
				\begin{align*}
					| \hat{d}_{g_{i}, E_i}(y,z) - d_{g_0}(y, z)| \leq \delta_i';
				\end{align*} 
			\item [(iii)] $\mathrm{dvol}_{g_i}|_{E_i \cap B_{g_i}(x_i, r_0)} \to \mathrm{dvol}_{g_0}|_{B_{g_0}(x,  r_0)}$ weakly as measures.
		\end{itemize}
		Finally, by taking a sequence $r\to 0$ and choosing a subsequence if necessary, we conclude.

	\end{proof}

Let $\mathbf{u}_i:E_i\to \mathbb{R}^3$ be the harmonic maps given by Theorem \ref{ds} restricted to $E_i$. We can now apply the same Arzel\`a-Ascoli type argument as in the proof of \cite[Theorem 5.1]{DS25} to the domains $\Omega_i := B_{g_i}(p_i, 1)$, with (\ref{small-vol-bad}) and Lemma \ref{lem-key-DS} replacing respectively \cite[Lemma 5.3]{DS25} and \cite[Lemma 5.4]{DS25} (while in the context of \cite[Theorem 5.1]{DS25}, we considered a fixed domain $\Omega$ with a fixed metric, in our present case the metrics $g_i$ in $\Omega_i$ converge uniformly to a $C^\alpha$-metric $g_0$).
We conclude that $\mathbf{u}_i|_{ B_{g_i}(p_i, 1)\cap E_i}$ converges  in the uniform topology to an isometric embedding $\mathbf{u}_\infty: (B_{g_0}(p_0, 1) , g_0) \to \mathbb{R}^3$. This implies that $\mathrm{Vol}_{g_0}(B_{g_0}(p_0, 1) ) = \omega _3$, which is the desired contradiction with  (\ref{desired contra}).

\end{proof}

By a continuity argument, we obtain the following uniform noncollapsing result.
\begin{lem}\label{vol-lower-lem}
	For any $\epsilon >0$, there exists $\epsilon _0(\epsilon) $ such that whenever $\epsilon _0 \leq \epsilon _0(\epsilon )$, we have $\mathrm{Vol}_g(E \cap B_g(x, 1) ) \geq (1- \epsilon ) \omega _3$ for any $x \in M$. Moreover, $\mathrm{Vol}_g(B_g(x,1) \setminus E) \leq 2 \epsilon\cdot \omega_3.$
\end{lem}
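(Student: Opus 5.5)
We prove the first assertion by a continuity argument along paths in $M$, with Proposition \ref{vol-improve-prop} as the engine. Fix $\epsilon>0$; we may assume $\epsilon$ is small. Take $\epsilon_0\le \epsilon_0(\epsilon)$ from Proposition \ref{vol-improve-prop}, shrunk further as needed below. Consider
\begin{align*}
f(x):=\mathrm{Vol}_g(E\cap B_g(x,1)).
\end{align*}
The function $f$ is continuous on $M$. Indeed, for $d_g(x,y)<\eta$ the symmetric difference of $B_g(x,1)$ and $B_g(y,1)$ lies in the annulus $B_g(x,1+\eta)\setminus B_g(x,1-\eta)$. Its volume tends to $0$ as $\eta\to 0$, since $\mathrm{Vol}_g(\partial B_g(x,1))=0$ for a smooth metric and measures are continuous along monotone families. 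Let
\begin{align*}
U:=\{x\in M: f(x)\ge (1-\epsilon)\omega_3\}.
\end{align*}
By continuity $U$ is closed. By Proposition \ref{vol-improve-prop}, every $x\in U$ satisfies $f(x)\ge (1-\tfrac12\epsilon)\omega_3$. Continuity then gives a neighborhood of $x$ on which $f>(1-\epsilon)\omega_3$, so $U$ is also open. Since $M$ is connected, it remains to show $U\neq\emptyset$, which would give $U=M$.

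For nonemptiness we use the end. By asymptotic flatness, at points $x$ far out in the chosen end the metric on $B_g(x,1)$ is $C^0$-close to Euclidean, so $\mathrm{Vol}_g(B_g(x,1))\to\omega_3$ as $|x|\to\infty$. We also need $B_g(x,1)\subset E$, or at least that $B_g(x,1)\setminus E$ has tiny volume. Here we use that $E$ is closed, unbounded and contains the end. The most natural reading is that $E$ contains $M\setminus K'$ for some compact $K'$, since $\partial E$ is smooth and, being a smooth boundary of a region containing a neighborhood of infinity, is compact. Then for $|x|$ large, $B_g(x,1)\subset E$, and $f(x)$ is arbitrarily close to $\omega_3$. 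This step is the main point requiring care. If $\partial E$ were not known to be compact, I would instead use $\mathrm{Area}(\partial E)\le C\epsilon_0^{2-\epsilon}$ together with the relative isoperimetric inequality on near-Euclidean balls far out. Each such ball has either tiny $E$-part or tiny complement, and since $E$ contains the end the second alternative must occur far out.

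For the second assertion, write
\begin{align*}
\mathrm{Vol}_g(B_g(x,1)\setminus E)=\mathrm{Vol}_g(B_g(x,1))-f(x)\le \mathrm{Vol}_g(B_g(x,1))-(1-\epsilon)\omega_3 .
\end{align*}
By the volume comparison Theorem \ref{vol-comp} with $p=2$ and $r=1$, together with $\int|\mathrm{Ric}_-|^2\le C\int|\mathrm{Rm}|^2\le C\epsilon_0$, we get
\begin{align*}
\mathrm{Vol}_g(B_g(x,1))\le (1+C\epsilon_0^{1/4})^4\omega_3\le (1+\epsilon)\omega_3
\end{align*}
once $\epsilon_0$ is small depending on $\epsilon$. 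Combining the two displays gives the bound $2\epsilon\,\omega_3$.
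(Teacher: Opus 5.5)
Your proposal is correct and follows essentially the same route as the paper. Like the paper, you run the open--closed argument on the set where $\mathrm{Vol}_g(E\cap B_g(x,1))\ge(1-\epsilon)\omega_3$: Proposition~\ref{vol-improve-prop} gives openness, asymptotic flatness gives nonemptiness, and your bound on $\mathrm{Vol}_g(B_g(x,1)\setminus E)$ via Theorem~\ref{vol-comp} is exactly the paper's alternative argument. You are also somewhat more careful than the paper in justifying that $x\mapsto\mathrm{Vol}_g(E\cap B_g(x,1))$ is continuous and that $U\neq\emptyset$.
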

\begin{proof}
Consider the set
\begin{align*}
	\mathcal{A}:= \{ x \in M: \mathrm{Vol}_g(E \cap B_g(x, 1) ) \geq (1- \epsilon ) \omega _3\} .
\end{align*}
By the asymptotic flatness condition on $M$, $\mathcal{A} \neq \emptyset$.
Since $E$ is a smooth domain, $\mathrm{Vol}_g(E \cap B_g(x,1) )$ is continuous with respect to $x$, so $\mathcal{A}$ is a closed subset. By Proposition \ref{vol-improve-prop}, for $\epsilon_0(\epsilon)$ small enough, $\mathcal{A}$ is also open. Thus $\mathcal{A}= M$. The upper bound for $\mathrm{Vol}_g(B_g(x,1) \setminus E)$ follows from (\ref{small-vol-bad}). Alternatively, one can also prove it by combining the lower volume bound with the volume comparison Theorem \ref{vol-comp}: for any $\epsilon_0$ smaller than a uniform constant,
\begin{align*}
    \mathrm{Vol}_g(B_g(x,1) \setminus E) &\leq \mathrm{Vol}_g(B_g(x,1) ) - \mathrm{Vol}_g(E\cap B_g(x,1) )\\
    &\leq (1+ C\cdot \epsilon_0^{1/4})^4 \omega_3 - (1- \epsilon) \omega_3\\
    &\leq 2 \epsilon \omega_3.
\end{align*}

\end{proof}

Together with Lemma \ref{lem-harmonic}, we immediately get

\begin{cor}\label{har-rad-prop}
There exists a uniform $0<r_1\leq 1$ such that for any complete asymptotically flat $3$-manifold $(M,g)$   with non-negative scalar curvature and (\ref{epsilon-condition}) satisfied for a uniformly small $\epsilon_0$, for any $x \in M$,
    the $W^{2,2}$-harmonic radius at $x$ satisfies $r_H(x)\geq r_1$.
\end{cor}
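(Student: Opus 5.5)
The corollary follows by feeding the uniform volume lower bound of Lemma \ref{vol-lower-lem} into Lemma \ref{lem-harmonic}. First, fix once and for all a universal value of the parameter in Lemma \ref{vol-lower-lem}, say $\epsilon = \tfrac12$. That lemma then supplies a threshold $\epsilon_0(\tfrac12)$ such that whenever (\ref{epsilon-condition}) holds with $\epsilon_0 \le \epsilon_0(\tfrac12)$, we have, for every $x \in M$,
\begin{align*}
\mathrm{Vol}_g(B_g(x,1)) \ge \mathrm{Vol}_g(E\cap B_g(x,1)) \ge \tfrac12 \omega_3 .
\end{align*}
This is the step that uses everything before it: the continuity argument, driven by Proposition \ref{vol-improve-prop} and the asymptotic flatness that makes $\mathcal{A}$ nonempty. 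Here we only invoke it as stated.

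Next, set $v_0 := \tfrac12\omega_3$; we may replace $v_0$ by $\min(v_0,\tfrac12)$ so that it lies in $(0,1)$, as Lemma \ref{lem-harmonic} requires. Apply Lemma \ref{lem-harmonic} with this $v_0$. It gives a threshold $\epsilon_0(v_0)$ and a radius $r_0(v_0)>0$ such that, for $\epsilon_0 \le \epsilon_0(v_0)$, every point $x$ with $\mathrm{Vol}_g(B_g(x,1)) \ge v_0$ satisfies $r_H(y) \ge r_0$ for all $y \in B_g(x,2)$, and in particular $r_H(x) \ge r_0$. By the first step, every $x \in M$ meets the volume hypothesis. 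Take the uniform smallness $\epsilon_0 \le \min\{\epsilon_0(\tfrac12), \epsilon_0(v_0)\}$ and set $r_1 := \min\{r_0(v_0), 1\}$. Then $r_H(x) \ge r_1$ for all $x \in M$. Both thresholds, and $r_1$, depend only on the universal choices made above, not on $(M,g)$.

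The only point needing care is the dependence of constants. Lemma \ref{vol-lower-lem} must be used with a fixed $\epsilon$, so that its threshold is universal. Lemma \ref{lem-harmonic} needs the volume of the full ball $B_g(x,1)$ rather than of $E \cap B_g(x,1)$, which is handled by the trivial inclusion $E\cap B_g(x,1)\subset B_g(x,1)$. There is no genuine obstacle beyond this bookkeeping.
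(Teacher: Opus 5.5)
Your proposal is correct and is exactly the paper's argument: the paper deduces the corollary in one line by feeding the uniform volume lower bound of Lemma \ref{vol-lower-lem} (for a fixed universal $\epsilon$) into Lemma \ref{lem-harmonic}. Your bookkeeping spells out what that one line leaves implicit: capping $v_0$ below $1$, using $E\cap B_g(x,1)\subset B_g(x,1)$, and taking the minimum of the two smallness thresholds.
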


\section{Proof of the main theorem}

\begin{proof}[Proof of Theorem \ref{main-thm}]

If $(M_1, g_1)$ is a complete asymptotically flat $3$-manifold with non-negative scalar curvature, such that for a chosen end,
	\begin{align*}
		m(g_1) \leq \epsilon _0,\quad \int_{M} |\mathrm{Rm}_{g_1}|^2\mathrm{dvol}_{g_1} \leq 1 
\end{align*}
for some constant $\epsilon_0>0$, 
then the rescaled metric $\tilde{g}=\epsilon _0 ^{-1} g_1$ satisfies
\begin{align} \label{equiv}
	m(\tilde{g}) \leq \sqrt{ \epsilon _0}, \quad \int_{M}|\mathrm{Rm}_{\tilde{g}}|^2 \mathrm{dvol}_{\tilde{g}} \leq \sqrt{ \epsilon _0}.
\end{align} 
Hence, to show the main theorem, it is clearly enough to prove that for any $\epsilon>0$, the desired statement holds if the metric satisfies the upper bounds (\ref{equiv}) for some $\epsilon_0=\epsilon_0(\epsilon)$ small enough.

Thus, fix $\epsilon \in (0,1)$, let $\epsilon_0>0$ which will be fixed in the proof and which will only depend on $\epsilon$, and assume that $(M, g)$ is a complete asymptotically flat $3$-manifold with non-negative scalar curvature, which satisfies for some chosen end:
\begin{align}\label{epsilon-conditionbis}
	m(g) \leq \epsilon _0, \quad \int_{M} |\mathrm{Rm}_g|^2 \mathrm{dvol}_{g} \leq \epsilon _0.
\end{align} 
In the following, we denote by $C$ a universal positive constant, which may change from line to line.

Let us explain why we can assume without loss of generality that the metric $g$ is uniformly bounded in the $C^l$-topology for any integer $l\geq0$. 
We choose an exhaustion $\Omega_i $ of $(M,g)$ and choose smooth functions $\chi_i$ taking values in $[0,1]$ and supported in $\Omega_i$ such that $\chi_i \to 1$ and $\|\nabla \chi_i\|_{\infty} \to 0$ locally uniformly. By the uniform lower bound on the harmonic radius in Corollary \ref{har-rad-prop} and Remark \ref{harmtoinj}, we know that the weak injectivity radius satisfies a uniform lower bound $\rho (\delta_0 , M) \geq \rho _0$ for some uniform $0<\delta_0 < 1$ and $\rho _0>0$. By Remark \ref{yang1}, an $(A_0, B_0)$-Sobolev inequality holds on $M$ for uniform $A_0, B_0>0$. By Theorem \ref{yang2}, we have a sequence of local Ricci flows $g_i(t)$ satisfying $\frac{\partial g_i(t)}{\partial t} = -2 \chi_i^2 \mathrm{Ric}_{g_i(t) }$ and $g_i(0)= g$, all of which exist for a uniform time $T_0>0$. Since $(M,g)$ is asymptotically flat, in particular having bounded geometry depending on the metric $g$, by the curvature estimate in Theorem \ref{yang2}, we know that $g_i(t)$ has bounded curvature for a short time, depending on the initial metric $g$ while independent of $i$. By Hamilton's compactness theorem \cite{Hamilton95}, up to a subsequence, $g_i(t) \to g(t)$ locally smoothly for a Ricci flow $g(t)$, $0\leq t \leq T_0$ on $M$ with $g(0)=g$. By the standard uniqueness result for Ricci flow with bounded curvature, $g(t)$ agrees with the classical Ricci flow starting from $g$.
		By the uniform estimates in Theorem \ref{yang2}, we have for any $t\in [0,T_0]$:
\begin{align*}
	\int_{M} |\mathrm{Rm}_{g(t)}|^2(t) \mathrm{dvol}_{g(t)} &\leq C \cdot \int_{M} | \mathrm{Rm}_g|^2 \mathrm{dvol}_{g} \leq C \cdot \epsilon _0,\\
	\|\mathrm{Rm}_{g(t)}\|_{C^0(M)} &\leq C\cdot t ^{- \frac{3}{4}} \|\mathrm{Rm}_{g} \|_{L ^2(M)}.
\end{align*} 
Note that $\int_{0}^{T_0} t^{-\frac{3}{4}}dt <\infty$ so using the curvature estimate above and the Ricci flow equation, we also have $e ^{-C \sqrt{\epsilon _0}  t ^{\beta }} g \leq g(t) \leq e ^{C \sqrt{\epsilon _0}  t ^{\beta  } } g$ for some $\beta  \in (0, \frac{1}{4})$. 
	  Set $h := g(T_0)$. Then 
      \begin{align}\label{bbilip}
		e ^{-C \sqrt{\epsilon _0} } g \leq h \leq e ^{C \sqrt{\epsilon _0} } g ,\ \ \int_{M}|\mathrm{Rm}_{h}|^2 \leq C\cdot \epsilon _0.
	\end{align}	 
    By Shi's estimates \cite{Shi89}, we have for any integer $l\geq 0$:
	\begin{align*}
		|\nabla ^{l} \mathrm{Rm}_{h}| \leq C(l).
	\end{align*} 
Moreover, it is well-known that $h$ is asymptotically flat with the same mass $m(h) = m(g) \leq \epsilon _0$ and $R_{h} >0$ (cf. \cite[Section 2]{YuLi}). 
Since $g$ and $h$ are $\epsilon$-bilipschitz by (\ref{bbilip}) if $\epsilon_0$ is small enough, we only need to prove the theorem for $h$ instead of $g$. 
This finishes the explanation of why $g$ can assumed to be uniformly $C^l$-bounded for our purpose.

In the remaining of the proof, we will thus assume that $(M,g)$ is as in (\ref{epsilon-conditionbis}), and additionally that the metric $g$ is uniformly bounded in the $C^l$-topology for any integer $l\geq0$. 
 Our goal is to show that  $g$ is $\epsilon$-bilipschitz to the flat $\mathbb{R}^3$ if $\epsilon_0=\epsilon_0(\epsilon)$ is small enough.

    We first notice that $M = M_{ext}$. Otherwise, $\partial M_{ext} \neq \emptyset$ consists of minimal $2$-spheres $\Sigma$. By the Gauss-Codazzi equation and the uniform curvature bound, 
    \begin{align}\label{K-upper}
        K_{\Sigma } = \frac{1}{2} R_{g} - \mathrm{Ric}_{g}(\nu , \nu ) - \frac{1}{2} |A_\Sigma|^2 \leq C.
    \end{align} 
    By the famous Penrose inequality \cite{HI01, Bray01}, we know that
    $$\mathrm{Area}_g(\Sigma) \leq 16 \pi\cdot m(g)^2 \leq C \cdot \epsilon_0^2.$$
    By the Gauss-Bonnet theorem, and the fact that $\Sigma$ consists of $2$-spheres, together with (\ref{K-upper}), we obtain
	\begin{align*}
		4 \pi \leq \int_{\Sigma } K_{\Sigma } \leq C\cdot \mathrm{Area}_g(\Sigma ) \leq C\cdot \epsilon _0^2,
	\end{align*} 
a contradiction for all small enough $\epsilon _0$.

Recall by Theorem \ref{ds} that 
we have a good region
$$E\subset M$$
and a 
globally well-defined harmonic map 
$$\mathbf{u} = (u^1, u^2, u^3):M\to \mathbb{R}^3.$$
Let $k\in \{1,2,3\}$.
By the mass inequality (\cite[Theorem 1.2]{BKKS22}), we have 
\begin{align}\label{mass}
	\int_{M} \frac{|\nabla ^2 u^{k}|^2}{|\nabla u^{k}|} \leq C\cdot \epsilon _0.
\end{align}

Let $2r_0\in [0,1]$ be equal to the minimum of $1$ and the infimum of the $W^{2,2}$-harmonic radius on $(M,g)$ (see Definition \ref{def:harmonic rad}). By Corollary \ref{har-rad-prop}, $2r_0$ is lower bounded by a uniform strictly positive number independent of $(M,g)$ as long as $\epsilon_0$ is small enough.
For any $x \in M$, set $\tilde{u}^{k}:= u^k - \fint_{B_{g}(x, 2 r_0)} u^k$. Then using the uniform lower bound on $r_0$, we have in some harmonic coordinates in $B_{g}(x, 2 r_0)$:
\begin{align*}
	\Delta _{g} \tilde{u}^{k} = g ^{ ij} \partial_i \partial_j \tilde{u}^{k} = 0,
\end{align*}
and
$$ C^{-1} \delta _{ij} \leq g ^{ ij} \leq C \delta _{ij}\quad \text{and}\quad \| \partial ^{l} g ^{ ij}\|_{C^{\alpha }} \leq C(l),$$
where the constant $C(l)$ depends only on the integer $l\geq 0$. By the Poincar\'e inequality,
\begin{align*}
	\fint_{B_{g}(x, 2 r_0)} |\tilde{u}^{k}| \leq C r_0\cdot \fint_{B_g(x, 2 r_0)} |\nabla \tilde{u}^{k}| = C \fint_{B_g(x, 2 r_0)}|\nabla u^k|,
\end{align*} 
where we used in the last inequality that $r_0\leq 1$. By Moser's iteration argument, we have
\begin{align*}
	\| \tilde{u}^k \|_{L^{\infty}(B_{g}(x, \frac{7}{4} r_0) )} \leq C\cdot \fint_{B_{g}(x, 2 r_0)} |\nabla u^k|. 
\end{align*} 
By Schauder's estimate, we have
\begin{align}\label{schauder}
	\| \tilde{u}^{k}\|_{C^{2, \alpha }(B_g(x, \frac{3}{2}r_0) )} \leq C\cdot \fint_{B_{g}(x, 2 r_0)} |\nabla u^k|.
\end{align}

Since the Ricci curvature of $g$ is uniformly bounded thanks to the uniform $C^l$-bound on $g$, the Bishop inequality yields a uniform upper bound on the relative volume ratio. Together with the Poincar\'e inequality and the mass inequality (\ref{mass}), we have 
\begin{align*}
	&\left| \fint_{B_{g}(x,r_0)} |\nabla u^k| - \fint_{B_{g}(x, 2 r_0)} |\nabla u^k| \right| \\
	&\ \ \leq \frac{\mathrm{Vol}_g(B_g(x, 2r_0) )}{\mathrm{Vol}_g(B_g(x, r_0) )} \cdot \fint_{B_{g}(x, 2 r_0)} \left| |\nabla u^k| - \fint_{B_{g}(x, 2 r_0)} |\nabla u^k| \right|  \\
	&\ \ \leq C \cdot \fint_{B_{g}(x,2 r_0)} |\nabla |\nabla u^k|| \\
	&\ \ \leq C \left( \fint_{B_{g}(x, 2 r_0)} \frac{| \nabla |\nabla u^k| |^2}{|\nabla u^k|} \right) ^{\frac{1}{2}} \cdot \left( \fint_{B_g(x,2 r_0)} |\nabla u^k| \right)^{\frac{1}{2}} \\
	&\ \ \leq C\cdot  \epsilon _0 ^{\frac{1}{2}} \cdot  \left( \fint_{B_g(x, 2 r_0)} |\nabla u^k| \right)^{\frac{1}{2}},
\end{align*} 
which implies the inequality 
\begin{align}\label{ineqmin1}
	\fint_{B_{g}(x, 2 r_0)} |\nabla u^k| &\leq C\cdot \fint_{B_{g}(x, r_0)} (|\nabla u^{k}| +1). 
\end{align} 
Indeed, if $\fint_{B_g(x, 2 r_0)} |\nabla u^k| \leq 1$ then (\ref{ineqmin1}) is clearly true.
If $\fint_{B_g(x, 2 r_0)} |\nabla u^k| \geq 1$, then $\left( \fint_{B_g(x, 2 r_0)} |\nabla u^k| \right)^{\frac{1}{2}} \leq  \fint_{B_g(x, 2 r_0)} |\nabla u^k| $, which together with the above inequality implies that
\begin{align*}
	\fint_{B_g(x, 2 r_0)} |\nabla u^k| \leq \fint_{B_g(x,  r_0)} |\nabla u^k| + C \epsilon _0 ^{\frac{1}{2}} \fint_{B_g(x, 2 r_0)} |\nabla u^k|,
\end{align*} 
thus by taking $\epsilon _0 \ll 1$ so that $C \epsilon _0 ^{\frac{1}{2}} \leq \frac{1}{2}$, 
\begin{align*}
	\fint_{B_g(x, 2 r_0)} |\nabla u^k| \leq 2 \fint_{B_g(x, r_0)} |\nabla u^k|,
\end{align*} 
which gives (\ref{ineqmin1}) too.

Next, combining (\ref{schauder}) and (\ref{ineqmin1}), we have
\begin{align}\label{c2alpha}
	\|\tilde{u}^{k}\|_{C^{2, \alpha }(B_g(x, \frac{3}{2}r_0) )} \leq C\cdot \fint_{B_{g}(x, r_0)} (|\nabla u^{k}| +1). 
\end{align} 
We claim that there exists a small enough $\epsilon_0(\epsilon)$ depending only on $\epsilon$ such that for all $\epsilon_0 \leq \epsilon_0(\epsilon)$, the following holds:
\begin{align}\label{ifthen}
\text{if $\sup_{B_g(x, r_0)}|\nabla u^k| \leq 1+ \epsilon $, then $\sup_{B_g(x, r_0)}|\nabla u^k| \leq 1+ \frac{1}{2} \epsilon $}.
\end{align} 
To check this,  we notice that the assumption in (\ref{ifthen}) together with (\ref{c2alpha}) implies that
\begin{align*}
	\| \tilde{u}^{k}\|_{C^{2, \alpha }(B_g(x, \frac{3}{2}r_0) )} \leq C.
\end{align*} 
In particular,
\begin{align}\label{C0-3/2-bound}
    \|\nabla^2 u^k\|_{C^0(B_g(x, \frac{3}{2} r_0) )} = \|\nabla^2 \tilde{u}^k\|_{C^0(B_g(x, \frac{3}{2} r_0) )} \leq C.
\end{align}
Choose $0<\delta _2 \ll \delta _1 \ll \epsilon < 1$ and $\delta_1 < \frac{1}{2} r_0$. Recall that by Lemma \ref{vol-lower-lem}, for all small enough $\epsilon _0$, and all $x\in M$,
\begin{align*}
	\mathrm{Vol}_g(B_g(x, 2r_0 ) \setminus E) \leq C\cdot \delta_2 ,
\end{align*} 
where $E$ is the good region in $M$. 

Consider any point $z \in B_g(x, r_0) $. Then $B_g(z, \delta_1) \subset B_g(x, \frac{3}{2} r_0)$. Using the assumption in (\ref{ifthen}), (\ref{C0-3/2-bound}) and bullet (2) in Theorem \ref{ds}, we have for $\epsilon_0$ small enough:
\begin{align*}
	\fint_{B_{g}(z, \delta _1)} |\nabla u^k| &\leq \frac{1}{\mathrm{Vol}_g(B_g(z, \delta_1))} \int_{B_g(z, \delta _1) \cap E} |\nabla u^k| + C \delta _1 ^{-3} \cdot \mathrm{Vol}_g(B_g(z, \delta _1)\setminus E) \\
						&\leq 1+ \delta _2 + C\cdot \delta _1 ^{-3} \delta _2\\
						&\leq 1+ \delta_1.
\end{align*} 
So there exists $z' \in B_{g}(z, \delta _1)$ such that
$|\nabla u^{k}|(z') \leq 1+ \delta _1  $. By the uniform Hessian estimate (\ref{C0-3/2-bound}),  we have $|\nabla u^{k}|(z) \leq |\nabla u^{k}|(z') + C \cdot d(z, z') \leq 1+ \delta _1 + C \delta _1 \leq 1+ \frac{1}{2} \epsilon $, which implies the conclusion of the claim (\ref{ifthen}).


By this improvement property (\ref{ifthen}) and a continuity argument, we obtain that $|\nabla u^{k}|(x) \leq 1+ \epsilon $ holds for all $x \in M$. Besides, (\ref{C0-3/2-bound}) shows that $| \nabla ^2 u^{k}|(x) \leq C $ for all $x \in M$. Similarly, if $\epsilon_0$ is small enough, for all $x \in M$, using bullet (2) in Theorem \ref{ds} again, we have
\begin{align*}
	\fint_{B_g(x, \delta_1)} |\langle \nabla u^{i}, \nabla u^{j} \rangle_{g}- \delta _{ij}| &\leq \frac{1}{\mathrm{Vol}_g(B_g(x, \delta_1) )} \int_{B_g(x, \delta_1) \cap E} |\langle \nabla u^{i}, \nabla u^{j} \rangle_{g}- \delta _{ij}| \\
	 &\ \ + C\delta_1^{-3}\cdot \mathrm{Vol}_g( B_g(x, \delta_1) \setminus E) \\
&\leq \delta _1,
\end{align*}
which together with the uniform Hessian estimate implies that for all $x \in M$,
\begin{align} \label{biilii}
	|\langle \nabla u^{i}, \nabla u^{j} \rangle_{g}(x)- \delta _{ij}| \leq C \delta_1 \ll 1. 
\end{align} 
Thus $\mathbf{u}: M \to \mathbb{R}^3$ is globally nondegenerate.

Since $\mathbf{u}$ is one-to-one from the end of $M$ to the end of $\mathbb{R}^{3}$, by a degree argument,  $\mathbf{u}$ must be a diffeomorphism. Moreover, by (\ref{biilii}), $\mathbf{u}$ is $\epsilon$-bilipschitz whenever $\delta_1$ is small enough depending only on $\epsilon$.
We have thus proved that for any $\epsilon >0$, for all small enough $\epsilon _0 \leq \epsilon _0(\epsilon )$, any $(M,g)$ as in (\ref{epsilon-conditionbis}) is $\epsilon$-bilipschitz to the flat $\mathbb{R}^3$.


\end{proof}

\bibliographystyle{alpha}
\bibliography{main}

\end{document}